 \newtheorem{theorem}{Theorem}[section]
 \newtheorem{corollary}[theorem]{Corollary}
 \newtheorem{lemma}[theorem]{Lemma}
 \theoremstyle{assumption}
\theoremstyle{definition}
\newtheorem{definition}[theorem]{Definition}
 \theoremstyle{remark}
\newtheorem{remark}[theorem]{Remark}
\numberwithin{equation}{section}
\newcommand{\eps}{\varepsilon}
\newcommand{\norm}[1]{ \Vert#1 \Vert}
\newcommand{\abs}[1]{\left\vert#1\right\vert}
\newcommand{\set}[1]{\left\{ #1 \right\}}
\newcommand{\inner}[1]{\left(#1\right)}
\newcommand{\biginner}[1]{\big(#1\big)}
\def\@startsection#1#2#3#4#5#6{%
 \if@noskipsec \leavevmode \fi
 \par \@tempskipa #4\relax
 \@afterindentfalse
 \ifdim \@tempskipa <\z@ \@tempskipa -\@tempskipa \@afterindentfalse\fi
 \if@nobreak \everypar{}\else
     \addpenalty\@secpenalty\addvspace\@tempskipa\fi
 \@ifstar{\@dblarg{\@sect{#1}{\@m}{#3}{#4}{#5}{#6}}}%
         {\@dblarg{\@sect{#1}{#2}{#3}{#4}{#5}{#6}}}%
}
\def\@settitle{%
  \bgroup
  \centering
  \vglue1cm
  \fontsize{12}{15}\fontseries{b}\selectfont
  %\usefont{T1}{lmr}{b}{n}%
  \uppercasenonmath\@title
  \@title
  \vskip20pt plus 6pt minus 8pt
  \egroup
}
\def\@setauthors{%
  \begingroup
  \trivlist
  \centering \bfseries
 \normalsize\@topsep30\p@\relax
  \advance\@topsep by -\baselineskip
  \item\relax
  \andify\authors
 {\rmfamily\authors}%
  \endtrivlist
  \endgroup
}
\def\@setaddresses{\par
  \nobreak \begingroup
\normalsize
  \def\author##1{\nobreak\addvspace\bigskipamount}%
  \def\\{\unskip, \ignorespaces}%
  \interlinepenalty\@M
  \def\address##1##2{\begingroup
    \par\addvspace\bigskipamount\noindent
    \@ifnotempty{##1}{(\ignorespaces##1\unskip) }%
    {\ignorespaces##2}\par\endgroup}%
  \def\curraddr##1##2{\begingroup
    \@ifnotempty{##2}{\nobreak\indent{\itshape Current address}%
      \@ifnotempty{##1}{, \ignorespaces##1\unskip}\/:\space
      ##2\par}\endgroup}%
  \def\email##1##2{\begingroup
    \@ifnotempty{##2}{\nobreak\noindent{\itshape E-mail address}%
      \@ifnotempty{##1}{, \ignorespaces##1\unskip}\/: 
       ##2\par}\endgroup}%
   \def\urladdr##1##2{\begingroup
    \@ifnotempty{##2}{\nobreak\indent{\itshape URL}%
      \@ifnotempty{##1}{, \ignorespaces##1\unskip}\/:\space
      \ttfamily##2\par}\endgroup}%
  \addresses
  \endgroup
}
 \renewcommand\section{\@startsection{section}{1}{\z@}%
{27pt plus 6pt minus 8pt}{14pt plus 6pt minus 8pt}%%
{\center\normalfont\large\bfseries}}
\renewcommand\subsection{\@startsection{subsection}{2}{\z@}%
{27pt plus 6pt minus 8pt}{14pt plus 6pt minus 8pt}%%
{ \center
\normalfont\bfseries}}
\def\subsubsection{\@startsection{subsubsection}{3}%
  \z@{.5\linespacing\@plus.7\linespacing}{-.5em}%
  {\normalfont\itshape}}
 \def\@mainsize{10}\def\@ptsize{0}%
  \def\@typesizes{%
    \or{5}{6}\or{6}{7}\or{7}{8}\or{8}{9}\or{9}{11}%
    \or{10}{13}% normalsize
    \or{\@xipt}{13}\or{\@xiipt}{14}\or{\@xivpt}{17}%
    \or{\@xviipt}{20}\or{\@xxpt}{24}}%
\begin{document}

\title[Subellipticity of complex vector fields]{Subellipticity of  some  complex vector fields related to Witten Laplacian}

\author[W.-X. Li]{Wei-Xi Li}

\address[W.-X. Li]{ School of Mathematics and Statistics, Wuhan University, Wuhan 430072, China \& Hubei Key Laboratory of Computational Science, Wuhan University, Wuhan 430072, China
  }

 \email{wei-xi.li@whu.edu.cn}

 \author[C.-J. Xu]{Chao-Jiang Xu}

\address[C.-J. Xu]{Department of Mathematics, Nanjing University of Aeronautics and Astronautics, Nanjing 211106, China}

 \email{xuchaojiang@nuaa.edu.cn}

\begin{abstract}
  We consider  some system of complex vector fields   related to the semi-classical Witten Laplacian, and establish  the local subellipticity of  this system basing on   condition $(\Psi)$.  
 \end{abstract}

\subjclass[2020]{35H20,35H10}
\keywords{subelliptic estimate, complex system, Condition $(\Psi)$}

 \maketitle

\section{Introduction and main results}

Let  $\Omega\subset\mathbb R^n$ be a neighborhood of $0$,  and denote by  $i$    the  square root   of $-1$. We consider in this paper the following system of complex vector fields :
 \begin{equation}\label{syst+++}
  \mathcal P_j=\partial_{x_j}-i \inner{\partial_{x_j}\varphi (x)}\partial_{t},\quad
  j=1,\cdots,n,\quad (x,t)\in\Omega\times\mathbb R,
\end{equation}
where  $\varphi(x)$ is  a
{\it real-valued} function defined in a neighborhood  $\Omega$ of $0$.   This system was first studied by Treves \cite{MR0426068},   and considered therein is more general case  for $t$ varying  in $\mathbb R^m$ rather than $\mathbb R.$ 
Denote by $(\xi,\lambda)$   the Fourier variables of $(x,t)$. Then 
 the principle symbol $\sigma$ for the system $\set{\mathcal P_j}_{1\leq j\leq n}$ is   
\begin{eqnarray*}
	\sigma (x,t; \xi,\lambda)=\inner{i \xi_1+\inner{\partial_{x_1}\varphi }\lambda,\cdots, i \xi_j+\inner{\partial_{x_j}\varphi }\lambda}\in \mathbb C^n 
\end{eqnarray*}
with $ \inner{x,t; \xi,\lambda}\in T^*\inner{\Omega\times\mathbb R_t}\setminus\set{0},$ 
and thus the characteristic set is 
\begin{eqnarray*}
	\big\{(x,t;\xi,\lambda)\in T^*\inner{\Omega\times\mathbb R_t}\setminus\set{0}~\big|~\xi=0, ~\lambda\neq 0, ~\nabla\varphi(x)=0\big\}. 
\end{eqnarray*}
Since outside the characteristic set the system $\set{\mathcal P_j}_{1\leq j\leq n}$ is  elliptic,  we only need to study  the microlocal hypoellipticity  in the two components  $\set{\lambda>0}$ and  $\set{\lambda<0}$ under the assumption that 
\begin{eqnarray*}\label{gra}
	\nabla \varphi(0)=0.
\end{eqnarray*}
Note we may assume $\varphi(0)=0$ if replacing $\varphi$ by $\varphi-\varphi(0).$     By maximal hypoellipticity (in
the sense of Helffer-Nourrigat \cite{MR897103}) it means the
existence of a neighborhood $\tilde\Omega\subset \Omega $ of $0$ and a
constant $C$ such that for any $u\in C_0^\infty(\tilde\Omega\times \mathbb R)$ we have
 \begin{multline*}\label{maxhyp}
 \sum_{j=1}^n\inner{ \norm{\partial_{x_j}u}_{L^2(\mathbb R^{n+1})}
    +\norm{\inner{\partial_{x_j}\varphi} \partial_{t}u}_{L^2(\mathbb R^{n+1})} }\\
    \leq
    C \Big(\sum_{j=1}^n\norm{\mathcal P_{j}u}_{L^2(\mathbb R^{n+1})}
    +   \norm{u}_{L^2(\mathbb R^{n+1})}\Big).
  \end{multline*}
  Note the  maximal hypoellipticity will yield the following subellptic estimate   
  \begin{eqnarray*}
	\sum_{j=1}^n \norm{\partial_{x_j}u}_{L^2(\mathbb R^{n+1})}+\norm{\abs{D_t}^{\frac{1}{k+1}} u}_{L^2(\mathbb R^{n+1})}  \leq
    C  \sum_{j=1}^n\norm{\mathcal P_{j}u}_{L^2(\mathbb R^{n+1})}
    + C  \norm{u}_{L^2(\mathbb R^{n+1})},
\end{eqnarray*} 
provided $\varphi$ is finite type, i.e., for some integer $k\geq 1$,
\begin{eqnarray*}
	\sum_{\abs{\alpha}\leq k+1}\abs{\partial_x^\alpha\varphi(0)}\neq 0.
\end{eqnarray*}      
Thus the subellipticity is in some sense intermediate  between the maximal hypoellipticity and the local hypoellipticity.

We first recall the related works concerned with the subellipticity and  hypoellipticity as well as the links between these properties. 
 For the system of real vector fields,  it is well-known (\cite{Hormander67, Kohn78, RothschildStein76}) that the H\"ormander's  bracket condition is sufficient to obtain the subellipticity  in terms of  the Lie algebra generated by the real vector fields.  Moreover for real vector fields with analytic coefficients,  Derridj's work \cite{MR0407433} implies that the  H\"ormander's condition and thus subellipticity  is also a necessary condition to obtain hypoellipticity.    But the situation is quite different  in the setting of  complex vector fields;   Kohn \cite{MR2183286} constructed a counterexample,  showing we may  lose the subellipticity even though  the H\"ormander's bracket condition is fulfilled.      The hypoellipticity for the system \eqref{syst+++} was  initiated  by Treves  \cite{MR0426068} , and then continued  by   Maire \cite{MR567778}.  The standard $L^2$  subellipticity was investigated in Derridj \cite{MR3603885, MR2298786} and  a series of works \cite{MR2868553, MR2885111,MR2471931}    of Derridj-Helffer,  where the crucial tool  is to find suitable escaping curves.   So far it remain unclear  for the links between hypoellipticity and subellipticity for   systems of complex vector fields.   In fact we may ask the same question as in the case of real vector fields,  wether hypoellipticity implies subellipticity for system \eqref{syst+++}  under the assumption that $\varphi$ is analytic in $\Omega$.   This is  true for $n=1,$ but may be false when $n>1$ due to the work  \cite{MR2276079} of Journ{\'e} and  Tr{\'e}preau,  where they showed for some specific polynomial potential $\varphi$ that the system is hypoelliptic without being subelliptic.  For specific  quasihomogeneous  analytic potential functions  Derridj-Helffer  \cite{MR2885111} proved  hypoellipticity system \eqref{syst+++} is   indeed subelliptic for the dimension $n=2$.     Finally we mention  the criterion of Helffer-Nourrigat   \cite{ MR897103},  which is based on   the nilpotent group techniques   to characterize  the maximal hypoellipticity  for   general systems of one order pseudodifferential operators; see also Nourrigat's work \cite{MR1044428, MR1098615} for further development and Helffer-Nier'   lecture \cite{HelfferNier05}  for the  application of  Helffer-Nourrigat's criterion to Witten Laplacian.  
 
 Observe for the system $\set{\mathcal P_j}_{1\leq j\leq n} $,   we may perform partial Fourier transform with respect to $t$,  and  study  the  microhypoellipticity,  in the two directions $\lambda>0$  and $\lambda<0.$  Indeed we only need consider without loss of generality the   microhypoellipticity in the positive direction $\lambda>0,$ since the other direction $\lambda<0$ can be treated similarly by replacing $\varphi$ by $-\varphi.$   Consider the resulting system as follows after taking partial Fourier transform for $t\in\mathbb R$:
\begin{equation}\label{system2}
 \mathcal L_j=\partial_{x_j}+\lambda \inner{\partial_{x_j}\varphi},\quad
  j=1,\cdots,n.
\end{equation}
By maximal microhypoellipticity at $0$ in the positive direction in $\lambda>0$,  it means the existence of a positive number $\lambda_0>0,$   a
constant $C>0$  and  a neighborhood $\tilde\Omega\subset \Omega$ of $0,$  such that
\begin{multline}\label{hypo}
    \forall~ \lambda\geq \lambda_0,~\forall~u\in
  C_0^\infty(\tilde\Omega),\\
  \sum_{j=1}^n \inner{\norm{\partial_{x_j}u}_{L^2}
    +\norm{\lambda \inner{\partial_{x_j}\varphi} u}_{L^2}}\leq
    C\Big( \sum_{j=1}^n\norm{ \mathcal L_ju}_{L^2}
    +\norm{u}_{L^2}\Big),
  \end{multline}
where and throughout the paper we denote     $\norm{\cdot}_{L^2(\mathbb R^n)}$   by  $\norm{\cdot}_{L^2}$ for short  if no confusion occurs.  We remark  the  operators  defined in \eqref{system2}   is  closely related to the semi-classical Witten Laplacian
  \begin{eqnarray*} 
  \triangle_{ \lambda \varphi}^{(0)} =-\triangle_x +\lambda^{2} \abs{\partial_x \varphi}^2-\lambda \triangle_x   \varphi
  \end{eqnarray*}
  with $\lambda^{-1}$ the semi-classical parameter,  by the relationship
  \begin{eqnarray*}
  	\sum_{j=1}^n \norm{ \mathcal L_j u}^2_{L^2}=\inner{\triangle_{ \lambda \varphi}^{(0)} u, \ u}_{L^2},
  \end{eqnarray*}
  where $\inner{\cdot,  \cdot}_{L^2}$ stands for the inner product in $L^2(\mathbb R^n).$   Helffer-Nier \cite{HelfferNier05}  conjectured $ \triangle_{ \lambda \varphi}^{(0)} $  is subelliptic near $0$ if  $\varphi$ is analytic and has no local minimum near $0,$ and  this still remains open so far.   Note  \eqref{hypo} is a local estimate concerning the sharp  regularity near $0\in \mathbb R^n$ for $\lambda>0$,  and we have also its global   counterpart,  which is of independent interest for  analyzing  the semi-classical  lower bound of  Witten Laplacian.    We refer to Helffer-Nier's work \cite{HelfferNier05}  for the  detailed presentation on the topic of global maximal hypoellipticity and its application to the spectral analysis on Witten Laplacian; see also  the first author's  work \cite{MR3775156} for the extension to the non-polynomial potentials.  

In this work we aim to investigate the hypoellipticity by virtue of  condition $(\Psi)$. Recall condition   $(\Psi)$ was 
initially  introduced to study  the solvability of a pseudo-differential    
operator $p^w$ with complex-valued symbol $p_1+ip_2$; we refer to \cite{MR781537,MR2599384} for the comprehensive discussion on  condition  $(\Psi)$.   For a specific scalar  operator  
\begin{eqnarray*}
	\partial_{s}+  \mu(s),
\end{eqnarray*}
with $\mu$ real-valued function defined in an interval $I\subset\mathbb R$, 
condition $(\Psi)$ means the following change-of-sign condition:
\begin{eqnarray*}
	s\rightarrow \mu(s)\  \textrm{does not change sign from + to} - \textrm{as } s \ \textrm{increases in}\  I. 
\end{eqnarray*}
Moreover we say $\partial_{s}+ \mu(s) $ satisfies condition $( \overline \Psi)$ if $\partial_{s}- \mu(s)$ satisfies condition $(\Psi)$, that is,
\begin{eqnarray*}
	s\rightarrow \mu(s)\ \textrm{does not change sign from}  -  \textrm{to + as } s \ \textrm{increases in}\  I. 
\end{eqnarray*}
For the scalar operator $\partial_{s}-i \mu(s)\partial_{t}$,  the subellipticity is well explored by   condition $(\overline\Psi)$;  for instance if $\mu$ is finite type and
has no sign change from $-$ to $+$ as $s$ increases, then it is 
subelliptic (c.f.
\cite{MR0290201} and  \cite[Chapter 27]{MR781536}).  

Note there is no condition $(\Psi)$ for systems, in particular for the system $\mathcal L_j, 1\leq j\leq n,$ defined by \eqref{system2}.   For clear presentation we  consider the case of $n=2$ and our argument may be applied to more general case of $n\geq 2.$  
In what follows
we use $(x,y)$   instead of
$(x_1,x_2)$ as variables.  Then we can rewrite $\mathcal L_j, j=1,2,$ as
 \begin{equation}\label{syst}
 \left\{
 \begin{aligned}
& \mathcal L_1=\partial_{x}+\lambda \inner{\partial_{x}\varphi},\\
&\mathcal L_2=\partial_{y}+\lambda \inner{\partial_{y}\varphi}.
\end{aligned}
\right.
\end{equation}

To state our main result we first introduce the following assumption  which, roughly
speaking, says that the set of the points $(x,y)$ at which both of the functions  
$s\rightarrow \partial_x\varphi (s,y)$ and   $s\rightarrow \partial_y\varphi (x,s)$ change
sign from $-$ to $+$ for increasing $s$, has measure $0$.

\begin{definition}
[{\bf Assumption} $\boldsymbol{H_1(\alpha)}$ and $\boldsymbol{H_2(\alpha)}$] \label{def}
 Let $\alpha>0$ and let $\varphi$ be defined in a neighborhood $\Omega$ of $0$. Write  $\Omega=\Omega_x\times\Omega_y$  with $\Omega_x,\Omega_y$ being  the projections of $\Omega$ onto $x$ and $y$ axes, respectively.          We say $\varphi$ satisfies  Assumption $\boldsymbol{H_1(\alpha)}$ in $\Omega$ if   
    there exists  a family of disjoint open sets $\omega_{j} $  and   $C^1$-functions $r_j$,  $ 
    j=1,2,\cdots, N_1$ with $N_1$ an integer such that the following properties are fulfilled by $\omega_j$ and $r_j$.   
    \begin{enumerate}[(i)]
    	\item Denote by $\bar\omega_j$   the closure of $\omega_j$.       Then
    	  \begin{eqnarray*}
    	 \bigcup_{1\leq j\leq N_1} \ \bar \omega_j =\bar\Omega_y.
    \end{eqnarray*}  
    	\item Given $y\in\ \omega_j$ for $1\leq j\leq N_1$,   as $s$ increases in
  $\Omega_x$ the function
  $
    s\longmapsto\partial_x\varphi(s,y)
  $ either has no sign change from $-$ to $+$,
  or
  only changes sign from $-$ to $+$ at the point $r_{j}(y)$.  In the latter we suppose additionally that
  \begin{equation*}
s\longmapsto (\partial_y\varphi)(r_{j}(y), s)
  \end{equation*}
  doesn't change sign from $-$ to $+$ as $s$ increases.
  \item For any $1\leq j\leq N_1$ we have  $\big|\frac{d r_{j}}{dy}\big| \geq \alpha$
  in $\omega_{j}$.
    \end{enumerate}
Moreover Assumption $\boldsymbol{H_2(\alpha)}$ can be defined  symmetrically   by a family of pairs $(U_j,\tau_j)$, $1\leq j\leq N_2$, which satisfy similar properties as that of  $(\omega_j, r_j)$ such that for each $x\in U_j$,
 \begin{equation*}
s\longmapsto (\partial_x\varphi)(s,\tau_{j}(x))
  \end{equation*}
  does not change sign from $-$ to $+$ as $s$ increases if $s\longmapsto \partial_y\varphi(x,s)$ changes sign from $-$ to $+$ at point $\tau_j(x)$.
\end{definition}

\begin{theorem}\label{MainT2}
 Let $\alpha>0$ be given and  denote by
  $P_m$ the set of  polynomials with degree  $\leq m$. Then there exists
  a number $\delta$ depending only on  the degree $m$, and two
  constants $\lambda_0$ and $C$ depending only on $m$ and $\alpha$,  such that
  the following estimate 
  \begin{equation*}
\forall \ u\in C_0^\infty(]-\delta,\delta[ \ \times\  ]-\delta,\delta[) ,\quad    \norm{\partial_{x}u}_{L^2}+\norm{\lambda  (\partial_x\varphi ) u}_{L^2}\leq
    C\sum_{1\leq j\leq 2}\norm{\mathcal L_{j}u}_{L^2}
  \end{equation*}
  holds for all $\lambda\geq \lambda_0$ and
  for all $\varphi\in P_m$  satisfying  Assumption
 $\boldsymbol{H_1(\alpha)}$ in  the  neighborhood $]-\delta,\delta[ \ \times\  ]-\delta,\delta[$ of $0$.  Symmetrically 
 the   estimate 
  \begin{equation*}
\forall \ u\in C_0^\infty(]-\delta,\delta[ \ \times\  ]-\delta,\delta[) , \quad    \norm{\partial_{y}u}_{L^2}+\norm{\lambda  (\partial_y\varphi ) u}_{L^2}\leq
    C\sum_{1\leq j\leq 2}\norm{\mathcal L_{j}u}_{L^2}
  \end{equation*}
  holds for all $\lambda>\lambda_0$ and
  for all $\varphi\in P_m$  satisfying  Assumption
 $\boldsymbol{H_2(\alpha)}$ in $]-\delta,\delta[ \ \times\  ]-\delta,\delta[$. 
\end{theorem}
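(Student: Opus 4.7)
The strategy is to reduce the two-dimensional estimate to the classical one-dimensional scalar subelliptic estimate under condition $(\bar\Psi)$. The 1D input, following from the results cited in the introduction (\cite{MR0290201,MR781536}) combined with compactness in the finite-dimensional space $P_m$, reads: for a real polynomial $\mu$ of degree $\leq m$ with no $-$ to $+$ sign change on an interval $I$,
\[
\|\partial_s v\|_{L^2(I)} + \lambda \|\mu v\|_{L^2(I)} \leq C_m \|(\partial_s+\lambda\mu)v\|_{L^2(I)},
\]
for all $v\in C_0^\infty(I)$ and $\lambda\geq \lambda_0(m)$. Since $\mathcal L_1 u=\partial_x u+\lambda(\partial_x\varphi)u$, the triangle inequality reduces the theorem to proving $\|\lambda(\partial_x\varphi)u\|_{L^2}\leq C(\|\mathcal L_1 u\|_{L^2}+\|\mathcal L_2 u\|_{L^2})$.

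Using the partition $\bigcup_j \bar\omega_j=\bar\Omega_y$ from $H_1(\alpha)$, I would decompose
\[
\|\lambda(\partial_x\varphi)u\|_{L^2}^2=\sum_{j=1}^{N_1}\int_{\omega_j}\|\lambda(\partial_x\varphi)(\cdot,y)u(\cdot,y)\|_{L^2_x}^2\,dy.
\]
For indices $j$ where the first alternative in (ii) holds (no $-$ to $+$ sign change of $\partial_x\varphi(\cdot,y)$ for $y\in\omega_j$), apply the 1D scalar estimate pointwise in $y$ to $v=u(\cdot,y)$ and integrate; this directly bounds the contribution by a multiple of $\|\mathcal L_1 u\|_{L^2}^2$.

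The remaining case, where $\partial_x\varphi(\cdot,y)$ changes sign $-$ to $+$ only at $r_j(y)$, is the heart of the argument. Here $(\bar\Psi)$ fails globally in $x$ but holds on each half $\{x<r_j(y)\}$ and $\{x>r_j(y)\}$, while the supplementary condition says that for every $\rho\in r_j(\omega_j)$ the function $\partial_y\varphi(\rho,\cdot)$ has no $-$ to $+$ sign change, so $(\bar\Psi)$ holds in $y$ along every vertical line meeting the curve. I would introduce a cutoff $\chi(x,y)=\tilde\chi((x-r_j(y))/\epsilon)$ localizing away from the sign-change curve: on $\chi u$ apply the 1D scalar estimate in $x$ separately on each side of $r_j(y)$ (the support avoids the sign-change point), and on $(1-\chi)u$, confined to a strip of width $\sim\epsilon$ about the curve where $|\partial_x\varphi|\leq C\epsilon^{k-1}$ by the finite-type structure of $\varphi\in P_m$, apply the 1D scalar estimate in $y$ via the vertical $(\bar\Psi)$ condition. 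The transversality $|r_j'|\geq\alpha$ makes the curve a genuine $C^1$-graph and keeps the cutoff geometry under control.

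The main obstacle will be closing the estimate on the transition strip: one must choose $\epsilon$ small enough that the pointwise bound $|\partial_x\varphi|\leq C\epsilon^{k-1}$ combined with the $y$-directional estimate on $(1-\chi)u$ makes the strip's contribution to $\|\lambda(\partial_x\varphi)u\|_{L^2}$ absorbable, yet large enough that the commutator terms $[\mathcal L_i,\chi]$ (whose $\mathcal L_2$-part carries a factor $r_j'(y)$) can be handled. Uniformity of the constants over $\varphi\in P_m$ is delivered by compactness on the finite-dimensional space $P_m$ together with Lojasiewicz-type lower bounds; an upper bound on $|r_j'|$ (not explicit in the assumption but implied by the polynomial structure and transversality) is needed for the $\mathcal L_2$-commutator. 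The symmetric estimate under $H_2(\alpha)$ follows by interchanging the roles of $x$ and $y$ throughout.
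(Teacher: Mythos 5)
Your reduction to the bound $\norm{\lambda(\partial_x\varphi)u}_{L^2}\le C\sum_j\norm{\mathcal L_ju}_{L^2}$ and the splitting over the sets $\omega_j$ match the paper's starting point, but the treatment of the sign-change case has two gaps that I do not see how to repair in the form you propose. First, your cutoff $\chi(x,y)=\tilde\chi((x-r_j(y))/\epsilon)$ produces the commutator $[\mathcal L_2,\chi]$, which carries the factor $\epsilon^{-1}r_j'(y)$, and you acknowledge needing an upper bound on $|r_j'|$. No such bound is assumed (only $|r_j'|\ge\alpha$), and it actually fails in the motivating Maire example, where $r_j(y)=(\pm y/\sqrt{2\ell+1})^{1/\ell}$ has $r_j'$ blowing up at $y=0$ for $\ell>1$. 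The paper never differentiates a curve-adapted cutoff: it applies the one-dimensional condition-$(\Psi)$ energy inequality (Lemma \ref{lemsig}) separately on the two $x$-intervals meeting at the sign-change point, which leaves a \emph{trace term} $|u(r_j(y_0),y_0)|^2$ at that point rather than a commutator; this trace, weighted by $G^2$, is then absorbed through a separate estimate along the curve (estimate \eqref{10101505}), and the lower bound $|r_j'|\ge\alpha$ enters only through a change of variables $r_j(y_0)\to x$ there. Second, your strip estimate does not close as sketched: Theorem \ref{MainT2} carries no finite-type hypothesis, so the pointwise bound $|\partial_x\varphi|\le C\epsilon^{k-1}$ is not available, and the vertical $(\overline\Psi)$ condition along the lines $x=r_j(y_0)$ by itself yields no quantitative gain in $\lambda$ that could dominate $\lambda|\partial_x\varphi|\,|u|$ on the strip uniformly in $\varphi$; moreover the vertical condition is only assumed on the exact range $r_j(\omega_j)$, not on a full $\epsilon$-neighborhood. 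The balancing of $\epsilon$ against $\lambda$ and against the $\epsilon^{-1}$ commutators is precisely the point that needs a mechanism, and the paper supplies it through the weight functions $M_1$, $M_2$, $G$ with their slow-variation (doubling) properties, the rescaled one-dimensional lemmas of Lerner, and the Baker--Campbell--Hausdorff estimate of Lemma \ref{legu} that transfers control between the two directions.

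A further, more minor, issue is uniformity over $\varphi\in P_m$: the family of polynomials satisfying $\boldsymbol{H_1(\alpha)}$ is neither compact nor normalized, so "compactness plus Lojasiewicz" is not a proof; degenerating sequences can lose the hypothesis in the limit. The paper obtains uniform constants without any compactness argument, by rescaling with $M_1$, $M_2$ or $G$ so that the rescaled polynomial has all normalized derivatives bounded by $1$ and one of them bounded below by a constant $c_0(m)$, after which Lemma \ref{lemsig} applies with constants depending only on $m$ (and $\alpha$ through the change of variables mentioned above). If you want to salvage your outline, the essential missing ideas to import are: replace the cutoff along the curve by the boundary-term version of the one-dimensional $(\Psi)$ inequality, and replace the ad hoc $\epsilon$-strip bookkeeping by the $M_1,M_2,G$ weights and their doubling properties.
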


\begin{corollary}
If $\varphi\in  P_m$ satisfies  Assumption
 $\boldsymbol{H_1(\alpha)}$ in a neighborhood of $0$ and the condition that
 \begin{eqnarray*}
 \sum_{0\leq j\leq k}	\abs{\partial_x^{k+1} \varphi(0)}\neq 0
 \end{eqnarray*}
 for some $k$. Then system \eqref{syst} is (micro)subelliptic at positive direction, that is, there exists a constant $C>0,$ a neighborhood $\tilde\Omega$ of $0$ and a positive number $\lambda_0$  such that  
\begin{eqnarray*}
    \forall~ \lambda\geq \lambda_0,~\forall~u\in
  C_0^\infty(\tilde\Omega),\quad 
    \norm{\lambda^{1/(k+1)} u}_{L^2}\leq
    C \sum_{1\leq j\leq 2}\norm{\mathcal L_j u}_{L^2}.  \end{eqnarray*}    
	If $\varphi\in  P_m$ satisfies  Assumptions
 $\boldsymbol{H_1(\alpha)}$  and $\boldsymbol{H_2(\alpha)}$ in a neighborhood of $0$   then we have the following maximal (micro)hypoellipticity:  for any $\lambda\geq \lambda_0$ and any $u\in
  C_0^\infty(\tilde\Omega),$ 
 \begin{multline*}
      \norm{\partial_{x}u}_{L^2}+  \norm{\partial_{y}u}_{L^2}
    +\norm{\lambda \inner{\partial_{x}\varphi} u}_{L^2}
    +\norm{\lambda \inner{\partial_{y}\varphi} u}_{L^2}\leq
    C \sum_{1\leq j\leq 2}\norm{\mathcal L_j u}_{L^2}.
    \end{multline*}
\end{corollary}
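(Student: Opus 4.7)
The maximal hypoellipticity statement is essentially immediate from Theorem~\ref{MainT2}: under $\boldsymbol{H_1(\alpha)}$ the first displayed estimate of the theorem controls $\norm{\partial_xu}_{L^2}+\norm{\lambda(\partial_x\varphi)u}_{L^2}$, and the symmetric estimate under $\boldsymbol{H_2(\alpha)}$ controls $\norm{\partial_yu}_{L^2}+\norm{\lambda(\partial_y\varphi)u}_{L^2}$; summing these two bounds yields the desired maximal estimate. Only the first (subelliptic) assertion demands real work.

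The plan for the subelliptic estimate is to combine the first bound of Theorem~\ref{MainT2}, namely
\begin{equation*}
\norm{\partial_xu}_{L^2}+\norm{\lambda(\partial_x\varphi)u}_{L^2}\leq C\sum_{j=1}^2\norm{\mathcal L_ju}_{L^2},
\end{equation*}
with a one-variable subelliptic lower bound applied slice-by-slice in $y$. Concretely, it is enough to prove that for a suitably small $\delta>0$, the inequality
\begin{equation*}
\norm{\lambda^{1/(k+1)}v}_{L^2(\mathbb R)}^2\leq C\bigl(\norm{v'}_{L^2}^2+\norm{\lambda g\,v}_{L^2}^2\bigr)
\end{equation*}
holds for every $v\in C_0^\infty(]-\delta,\delta[)$, uniformly in $\lambda\geq\lambda_0$ and in the coefficient $g(x)=\partial_x\varphi(x,y)\in P_{m-1}$ as $y$ ranges over a small neighborhood of $0$. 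Since the right-hand side equals $\inner{(-\partial_x^2+\lambda^2 g^2)v,v}_{L^2}$, this is precisely a ground-state lower bound for a Schr\"odinger operator with polynomial potential $\lambda^2 g^2$; the standard rescaling $x\mapsto\lambda^{-1/(k+1)}x$ reduces it to a positivity statement for the model operator $-\partial_{\tilde x}^2+\tilde g_0(\tilde x)^2$, where $\tilde g_0$ is a nonzero polynomial of degree $\leq k$ (cf.\ \cite[Chapter~27]{MR781536}). Fubini and integration in $y$ then upgrade this one-variable estimate to $\norm{\lambda^{1/(k+1)}u}_{L^2}\leq C\sum_j\norm{\mathcal L_ju}_{L^2}$, as required.

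The main technical obstacle is to secure the constant $C$ in the one-variable estimate \emph{uniformly} over the family $\{g(\,\cdot\,,y)\}$ as $y$ varies. The finite-type hypothesis at the origin forces $g(\,\cdot\,,0)$ to have a zero of order $\leq k$ at $x=0$, but a priori $g(\,\cdot\,,y)$ could, for $y\neq 0$, acquire additional high-order zeros inside $(-\delta,\delta)$ at which the model problem would degenerate. Because $g$ is a polynomial of bounded degree, however, the roots of $g(\,\cdot\,,y)$ depend continuously on $y$, and a small perturbation of the order-$k$ zero at $x=0$ can only split into zeros of total multiplicity $k$; choosing $\delta$ small enough therefore isolates a single cluster of zeros near $x=0$ of multiplicity at most $k$. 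A compactness argument on the finite-dimensional space $P_{m-1}$, with a suitable normalization of the lowest nonvanishing $x$-derivative of $\varphi$ at $0$, then delivers a uniform constant $C$ and completes the proof of the subelliptic estimate.
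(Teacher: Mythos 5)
Your overall reduction is correct and is essentially the intended route: the maximal estimate is just the sum of the two estimates of Theorem \ref{MainT2}, and the subelliptic estimate follows from the first estimate of the theorem plus a one-variable degenerate estimate applied on each slice $y=\mathrm{const}$, Fubini, and integration in $y$.

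The one place where your write-up is weaker than it should be is precisely the step you call the main technical obstacle. Uniformity is only needed in $y$ (the constant in the corollary may depend on $\varphi$), and it has a one-line resolution that makes the root-tracking and the compactness argument on $P_{m-1}$ unnecessary: the hypothesis (read as $\sum_{0\leq j\leq k}\abs{\partial_x^{j+1}\varphi(0)}\neq 0$) gives some $j_0\leq k$ with $\partial_x^{j_0+1}\varphi(0,0)\neq 0$, hence by continuity $\abs{\partial_x^{j_0+1}\varphi(x,y)}\geq c_0>0$ on a small square, i.e. $g_y=\partial_x\varphi(\cdot,y)$ satisfies $\abs{g_y^{(j_0)}}\geq c_0$ on $]-\delta,\delta[$ uniformly in $\abs{y}<\delta$. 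With this, your slice inequality is immediate from the paper's Lemma \ref{lemsig}: its last assertion combined with $\max\abs{v}^2\leq 2\norm{v}_{L^2}\norm{v'}_{L^2}$ and Cauchy--Schwarz gives $\lambda^{1/(j_0+1)}\norm{v}_{L^2}\leq C\inner{\norm{v'}_{L^2}+\norm{\lambda g_y v}_{L^2}}$ with $C=C(j_0,c_0)$, and $j_0\leq k$ gives the exponent $1/(k+1)$ for $\lambda\geq 1$. By contrast, your version of this step is only an outline: the single global rescaling to a fixed model $-\partial_{\tilde x}^2+\tilde g_0(\tilde x)^2$ is heuristic (the rescaled potential still depends on $\lambda$ and on the terms of degree $\neq k$), and the cluster-of-zeros plus compactness argument never explains why the relevant constant varies continuously, nor why zeros cannot enter $]-\delta,\delta[$ from outside the cluster (one needs Rouch\'e on a fixed circle). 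None of this is a fatal gap, but it is the hard-looking part of your proof and it dissolves under the direct continuity argument. An even shorter route, closer to the proof of Theorem \ref{MainT2} itself: that proof establishes $\norm{M_1u}_{L^2}\leq C\sum_j\norm{\mathcal L_ju}_{L^2}$, and on the small square $M_1\geq\abs{\lambda\partial_x^{j_0+1}\varphi}^{1/(j_0+1)}\geq c\,\lambda^{1/(k+1)}$ pointwise, which yields the subelliptic estimate at once.
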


\begin{remark} 
  This improves a result of Nourrigat (see Theorem 2.1 of \cite{MR1098615}) by allowing the sign change from $-$ to $+$
  for increasing variable.  The above results may hold for general $\varphi$ rather than polynomial case,  following the argument presented in \cite{MR3775156}. 
\end{remark}

%\begin{remark}
%  Observe
%  the number $\delta$ in Theorem \ref{MainT2} depends only on the degrees of polynomials.
%  Then by virtue of the partition of unit, we can extend the maximal hypoelliptic estimate to the whole
%  space; that is if $\varphi\in P_m$ with $\nabla_x \varphi$ satisfies the assumptions
%  $H_1(\alpha;+\infty)$, $H_2(\alpha;+\infty)$, $\cdots$,
%  $H_n(\alpha;+\infty)$, then the maximal hypoelliptic estimate
%  \begin{equation*}
%    \sum_{1\leq j\leq n}\inner{\norm{\partial_{x_j}u}_{L^2}+
%    \norm{\lambda \varphi_{j}  u}_{L^2}}\leq
%    C\sum_{1\leq j\leq n} \norm{\mathcal L_{j}u}_{L^2}
%  \end{equation*}
%  holds for all $u\in C_0^\infty(\mathbb R^n)$ and for all
%  $\lambda>0$.
%\end{remark}

%\begin{theorem}\label{MainT3}
%  Let $n=2$ and let $\varphi$ be an real-valued analytic function defined in
%  $\Omega\subset \mathbb R^2$. Suppose that $\varphi$ has no local minimum in $\Omega$ and
%  that
%  \begin{equation}
%    \set{(x_1,x_2)\in\Omega;\quad\partial_{x_1}\varphi(x_1,x_2)\partial_{x_2}\varphi(x_1,x_2)=0}\subset
%    \set{(x_1,x_2)\in\Omega;\quad x_1 x_2 =0}.
%  \end{equation}
%  Then the  subelliptic
%  estimate \eqref{sub} holds.
%\end{theorem}

We would end up the introduction by an example of Maire (cf. \cite[Example 1.2]{MR567778}).
   These are the functions   
  $\varphi$ defined by
  $$\varphi(x,y)=x^{2\ell+1}-xy^2,\quad \ell\in\mathbb Z_+\setminus \set{0}.$$
  Then the system \eqref{syst} has the form of
  \begin{equation*}
    \left\{
    \begin{array}{lll}
      \mathcal L_1=\partial_{x}+\lambda \inner{(2\ell+1)x^{2\ell}-y^2}\\
      \mathcal L_2=\partial_{y}-2\lambda xy.
    \end{array}
    \right.
  \end{equation*}
  Direct verification  shows  that  Assumption $\boldsymbol{H_1(\alpha)}$ in $\mathbb R^2$ is fulfilled if we
  we can take $\omega_{1}=]-\infty,0[$ and
  $r_{1}(y)=\inner{-\frac{y}{\sqrt{2\ell+1}}}^{1\over\ell}$, and $\omega_{2}=]0,\infty[$ and
  $r_{2}(y)=\inner{\frac{y}{\sqrt{ 2\ell+1}}}^{1\over\ell}$. Then it follows Theorem \ref{MainT2} that  there exists two constants $C>0,\lambda_0\geq 0$ such that
 for any $u\in C_0^\infty(\mathbb R^2)$  and for all $\lambda\geq\lambda_0$,
  \begin{equation*}
    \norm{\partial_{x}u}_{L^2}+\norm{\lambda (\partial_{x}\varphi)u}_{L^2}\leq
    C\norm{\mathcal L_{1}u}_{L^2}+C\norm{\mathcal L_{2}u}_{L^2}.
  \end{equation*}
  As a result 
  \begin{equation*}
    \norm{ \lambda ^{\frac{1}{2\ell+1}}u}_{L^2}\leq
    C\norm{\mathcal L_{1}u}_{L^2}+C\norm{\mathcal L_{2}u}_{L^2}.
  \end{equation*}
  And hence the system $\mathcal L_j, 1\leq j\leq 2,$  is  subelliptic  with exponent $\frac{1}{2\ell+1}$.   We give here a new proof for the Maire's example basing on condition $(\Psi)$.    As proven by Helffer-Nier (see Chapter 11 of \cite{HelfferNier05}), the exponent
  $\frac{1}{2\ell+1}$ is optimal and can not be improved. This implies
  the system $(\mathcal L_1,\mathcal L_2)$ is not maximally
  hypoelliptic when $\ell>1$, otherwise a better exponent
  $\sigma=\frac{1}{3}$ would be deduced.

\section{Proof of Theorem \ref{MainT2}}

In this part we prove the main result.  Firstly we introduce some notations to be used frequently later. Let
$\varphi(x,y)$ be a polynomial of degree $m$. Then we can write  
$\partial_x\varphi(x,y)$ and $\partial_y\varphi(x,y)$ as the following
forms
\begin{equation}\label{10101520}
  \partial_x\varphi(x,y)=\sum_{j=0}^{\ell} a_j(y)x^j,  ~
  \textrm{and} ~\partial_y\varphi(x,y)=\sum_{j=0}^{d}
  b_j(x)y^j,
\end{equation}
where $\ell,d\leq m$ and $a_j(y)$, $b_j(x)$ are polynomials of $y$ and $x$
respectively, such that the leading coefficients $a_{\ell}\not\equiv
0$ and $b_{d}\not\equiv0$. For each pair $(x,y)$ we set
 \begin{equation*}
 \left\{
 \begin{aligned}
    & M_{1}(x,y)=\sum_{j=0}^\ell
     \abs{\lambda \partial_x^{j+1} \varphi(x,y)}^{\frac{1}{j+1}},\\
     &M_{2}(x,y)=\sum_{j=0}^d\abs{\lambda \partial_y^{j+1}
 \varphi(x,y)}^{\frac{1}{j+1}},
     \end{aligned}
     \right.
  \end{equation*}
  and  set
  \begin{equation}\label{deg}
 	G(x,y)=\sum_{1\leq i+j\leq m}\abs{\lambda \partial_x^{i}
  \partial_y^j\varphi(x,y)}^{\frac{1}{ i+j}}.
  \end{equation}
Let $\mathcal N_1$, $\mathcal N_2$ be
  defined by
  \begin{equation}\label{n1n2}
  \mathcal N_1=\set{y\in\mathbb R ; ~M_1(x,y)\neq 0~\textrm{for
  all~} x}, \quad
   \mathcal N_2=\set{x\in\mathbb R ; ~M_2(x,y)\neq 0~\textrm{for all~} y} .
   \end{equation}
  We remark that  $\mathcal N_1$ and  $\mathcal N_2$ are
  dense in $\mathbb R$, since $\mathcal N_1$ and $\mathcal N_2$ include,respectively, the
  sets $\set{y ; ~a_{\ell}(y)\neq 0}$ and $\set{x ; ~b_{d}(x)\neq0}$ which are dense in $\mathbb R$.

To simplify the notation we will use the capital letter $C$
to denote some generic constant  that may vary from line to line and   depend only on the number $\alpha$  in Definition \ref{def} and the degree $m$ of the polynomial $\varphi$, but are independent of $\lambda$.

\begin{lemma}\label{legu}
  Let $\delta>0$ be a given number, and let $I$ be a subset of
  $\mathbb R$. Then there exists a constant $C$ such that for any $u\in C_0^\infty\inner{]-\delta,  \delta[\,\times ]-\delta,  \delta[}$ we have
  \begin{equation}\label{10101701}
    \norm{G  u}_{L^2(I\times ]-\delta,\delta[)}\leq C
    \sum_{1\leq j\leq 2}\norm{\mathcal L_{j} u}_{L^2}
    +C \inner{\norm{M_{1}  u}_{L^2}+
    \norm{M_{2}  u}_{L^2(I\times ]-\delta,\delta[)}},
  \end{equation}
  recalling $G$ is defined  by \eqref{deg}.
\end{lemma}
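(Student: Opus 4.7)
The plan is to split $G$ pointwise as $G = M_1 + M_2 + G_{\mathrm{mix}}$, where
\begin{equation*}
G_{\mathrm{mix}}(x,y):=\sum_{\substack{i,j\geq 1\\ i+j\leq m}}\abs{\lambda\partial_x^{i}\partial_y^{j}\varphi(x,y)}^{1/(i+j)}
\end{equation*}
collects the ``genuinely mixed'' derivatives ($i,j\geq 1$). The pure part $M_1+M_2$ is already accounted for on the right-hand side of the lemma since trivially $\norm{(M_1+M_2)u}_{L^2(I\times]-\delta,\delta[)}\leq \norm{M_1 u}_{L^2}+\norm{M_2 u}_{L^2(I\times]-\delta,\delta[)}$. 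The real content is therefore to control $\norm{G_{\mathrm{mix}} u}_{L^2(I\times]-\delta,\delta[)}$, which I plan to handle by induction on the total order $k=i+j$, with $k=2$ as the base case.

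For the base case $i=j=1$, integration by parts gives
\begin{equation*}
\int \lambda(\partial_x\partial_y\varphi)\abs{u}^2 = -2\,\mathrm{Re}\int \lambda(\partial_x\varphi)\, u\cdot\partial_y\bar u,
\end{equation*}
and then substituting $\partial_y u=\mathcal{L}_2 u-\lambda(\partial_y\varphi)u$ yields
\begin{equation*}
\Big|\int \lambda(\partial_x\partial_y\varphi)\abs{u}^2\Big|\leq C\norm{\lambda(\partial_x\varphi) u}_{L^2}\bigl(\norm{\mathcal{L}_2 u}_{L^2}+\norm{\lambda(\partial_y\varphi) u}_{L^2}\bigr),
\end{equation*}
which matches the required form. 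The corresponding estimate with an absolute value inside the integrand (which is what $G$ actually demands, namely $\int\abs{\lambda\partial_x\partial_y\varphi}\abs{u}^2$) is then obtained by splitting according to the sign of the polynomial $\lambda\partial_x\partial_y\varphi$, using that the sign-change set is the zero set of a polynomial of degree $\leq m-2$ and can be handled by a smoothing of $\mathrm{sgn}$ or a semi-algebraic decomposition.

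For the inductive step, for $(i,j)$ with $i,j\geq 1$ and $i+j=k$, I would use the identity $\lambda\partial_x^i\partial_y^j\varphi=\partial_y(\lambda\partial_x^i\partial_y^{j-1}\varphi)$, integrate by parts against $\abs{u}^2$, and apply $\partial_y u=\mathcal{L}_2 u-\lambda(\partial_y\varphi)u$. This produces three kinds of factors: $\mathcal{L}_2 u$ (already on the RHS), a pure $y$-derivative factor absorbed into $\norm{M_2 u}_{L^2(I\times]-\delta,\delta[)}$, and an ``intermediate mixed'' factor involving $\lambda\partial_x^i\partial_y^{j-1}\varphi$ of strictly smaller total order $k-1$, to which the inductive hypothesis (or the trivial pure-derivative bound when $j-1=0$) applies. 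The fractional exponents $1/(i+j)$ coming from the definition of $G$ are matched to the integer powers produced by the IBP via Markov's inequality for polynomials of degree $\leq m$ on $]-\delta,\delta[$, which provides comparability of the various partials up to constants depending only on $m$ and $\delta$. The main obstacle I anticipate is the bookkeeping needed to ensure that only the strip-restricted norm $\norm{M_2 u}_{L^2(I\times]-\delta,\delta[)}$ (rather than the full-domain version) appears on the RHS — this asymmetry forces one to always perform the integration by parts in the $y$-variable rather than $x$ — together with the technical handling of absolute values and fractional powers throughout the induction; the polynomial structure of $\varphi$ is essential both for Markov's inequality and for controlling the sign-change sets that arise.
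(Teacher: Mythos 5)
Your reduction to the mixed terms and the first signed identity are fine, but the two steps you flag as merely ``technical'' are exactly where the argument breaks, and neither can be repaired along the lines you sketch. First, the absolute value: the integration by parts only controls the signed quantity $\int\chi_I\,\lambda(\partial_x\partial_y\varphi)\abs{u}^2$, while $G$ demands $\int\chi_I\,\abs{\lambda\partial_x\partial_y\varphi}\,\abs{u}^2$. If you split the domain along the zero set of the polynomial $\partial_x\partial_y\varphi$ and integrate by parts in $y$ on each piece, you pick up surface integrals of $\lambda\abs{\partial_x\varphi}\,\abs{u}^2$ over that zero set, and nothing on the right-hand side of \eqref{10101701} controls a trace of $\abs{u}^2$ weighted by $\lambda\abs{\partial_x\varphi}$ on a fixed hypersurface; if instead you smooth $\mathrm{sgn}$, its $y$-derivative produces terms of size $\epsilon^{-1}$ concentrated near that same set, again uncontrolled. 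Second, the induction does not close. Integrating $\lambda\partial_x^i\partial_y^j\varphi=\partial_y\bigl(\lambda\partial_x^i\partial_y^{j-1}\varphi\bigr)$ by parts and substituting $\partial_y u=\mathcal L_2u-\lambda(\partial_y\varphi)u$ leaves the factor $\norm{\lambda(\partial_x^i\partial_y^{j-1}\varphi)u}_{L^2}$, i.e.\ the lower-order mixed derivative at \emph{full} power of $\lambda$, whereas your inductive hypothesis (and the definition of $G$) only provides the fractional weight $\abs{\lambda\partial_x^i\partial_y^{j-1}\varphi}^{1/(i+j-1)}$; one cannot pass from the fractional weight back to the full one, and Markov's inequality is of no help, since it compares sup norms of a polynomial and its derivatives on an interval rather than the pointwise weights of different $\lambda$-homogeneity that occur in these weighted $L^2$ norms.

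This mismatch of exponents is precisely why the paper does not argue by integration by parts. Its proof (following Nier \cite{MR2987644}, Lemma 4.14) introduces $X=\partial_x/i$ and $Y=\chi_I(x)\partial_y/i$, first bounds $\norm{Xu}_{L^2}$ and $\norm{Yu}_{L^2}$ by the right-hand side via \eqref{wfd} and its $y$-analogue on $I\times]-\delta,\delta[$, and then estimates the weights $\abs{A_{p,q}}^{1/(p+q+1)}$ --- the iterated commutators of $X$ and $Y$ with the multiplication operator $\lambda\partial_x\varphi$, which are exactly the mixed derivatives you are after, localized by $\chi_I$ --- through the Baker--Campbell--Hausdorff formula combined with the spectral-theorem identity equating $\norm{\abs{A_{p,q}}^{1/(p+q+1)}u}_{L^2}^2$ (up to constants) with $\int_0^\infty\norm{\exp\inner{t^{p+q+1}A_{p,q}}u-u}_{L^2}^2\,t^{-3}dt$, followed by a reverse induction on $p$ (the commutators vanish once the order exceeds the degree $m$). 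That machinery produces the absolute values and the correct fractional exponents simultaneously, with no sign-splitting and no boundary terms; some such device is needed, and your outline does not supply a substitute for it.
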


\begin{proof}
    If $I=]-\delta,\delta[$ the estimate \eqref{10101701} is a straightforward consequence
    of Baker-Campbell-Hausdorff formula (see e.g. \cite[Lemma 4.14]{MR2987644}). 
    Next we treat the general
    case, following the argument presented in \cite{MR2987644}.  In this proof we always let $u\in
    C_0^\infty\inner{]-\delta,  \delta[\,\times ]-\delta,  \delta[}$.
    Direct verification  shows 
    \begin{equation}\label{wfd}
    	\norm{\partial_x u}_{L^2}^2+\norm{\lambda(\partial_x\varphi) u}_{L^2}^2=
    \norm{\mathcal L_1 u}_{L^2}^2+
    \inner{ \lambda (\partial_x^2\varphi) u,\ u}_{L^2}^2.
    \end{equation}
    Then
     $$\norm{\partial_x u}_{L^2}^2\leq
    \norm{\mathcal L_1 u}_{L^2}^2+
    \norm{|\lambda \partial_x^2\varphi|^{1\over
    2}u}_{L^2}^2.$$ Furthermore since the function
  $y\longmapsto u(x,y)$ with $x\in I$ fixed  has compact support in the interval
  $]-\delta,\delta[$, then
  \begin{equation*}
    \norm{\partial_y u}_{L^2(I\times ]-\delta,\delta[)}^2\leq
    \norm{\mathcal L_2 u}_{L^2(I\times ]-\delta,\delta[)}^2+
    \norm{|\lambda \partial_y^2\varphi|^{1/2}u}_{L^2(I\times ]-\delta,\delta[)}^2.
  \end{equation*}
  It follows  that
  \begin{equation*}
    \norm{X u}_{L^2}^2+
    \norm{Y u}_{L^2}^2\leq
    \sum_{1\leq j\leq 2}\norm{\mathcal L_j u}_{L^2}^2+2
    \norm{M_{1}u}_{L^2}^2
    +\norm{M_{2}u}_{L^2(I\times ]-\delta,\delta[)}^2,
  \end{equation*}
  where the operators $X$ and $Y$ are defined by $X=\frac{\partial_x}{i}$  and
  $Y=\chi_I(x)\frac{\partial_y}{i}$ with $\chi_I(x)$ being the characteristic function of the set
  $I$.
  Thus the desired estimate will follow if there exists a constant $C$ such that for
  any integer $q\geq 1$  one has
  \begin{equation}\label{pq0}
    \sum_{p\geq 0}
    \norm{\abs{A_{p,q}}^{\frac{1}{p+q+1}} u}_{L^2}\leq C\inner{
    \norm{X u}_{L^2}+
    \norm{Y u}_{L^2}
    +\norm{ \lambda (\partial_x\varphi) u}_{L^2}},
  \end{equation}
  where $A_{p,q}$ are  purely imaginary-valued functions defined iteratively  by
  $$
  A_{0,q}= i^{q-1}  [\underbrace{X,~ [X,\cdots [X}_{q},~ \lambda \partial_x\varphi]~]\cdots],\quad
  A_{p,q}= i    [Y,~A_{p-1,q}] \textrm{~ for ~}p\geq 1.
  $$
  Here we denote by $[P,\ Q]$ the commutator of  two operators $P$ and $Q$,  which is defined by $
		[P,\ Q]=PQ-QP
$. 
  Firstly note that for
   any $q\geq 1$ we have
  \begin{equation}\label{p0}
    \norm{\abs{A_{0,q}}^{\frac{1}{q+1}} u}_{L^2}\leq C\inner{
    \norm{\partial_x u}_{L^2}+
    \norm{ \lambda (\partial_x\varphi )u}_{L^2}+
    \norm{u}_{L^2}}.
  \end{equation}
  This can be deduced directly from Baker-Campbell-Hausdorff formula  (see e.g. \cite[Lemma 4.14]{MR2987644}). Next we will
  treat the case when $p\geq 1$.  Baker-Campbell-Hausdorff formula implies
  \begin{equation*}
  \begin{split}
    \exp\inner{tY}\exp\inner{t^{p+q}A_{p-1,q}}\exp\inner{-tY}\exp\inner{-t^{p+q}A_{p-1,q}}\\=
    \exp\biginner{t^{p+q+1}A_{p,q}+\sum_{j=1}^{m}c_j t^{p+q+j+1}A_{p+j, q}},
  \end{split}
  \end{equation*}
  where $c_j$ are a family of constants. As a result we use induction on $p$ to conclude
  that there exist two integers $n_1$ and $n_2$ depending only on $p$,
  and a sequence $(\eps_1,\eps_2,\cdots,\eps_{n_1})$ with $\eps_j=\pm
  1$, such that for each $p\geq 1$ one has
  \begin{equation*}
  \begin{split}
    \exp\inner{t^{p+q+1}A_{p,q}}
    &=\prod_{ j=1}^{n_2} Z_j  
    \exp\biginner{\sum_{k=1}^m c_k t^{p+q+k+1}A_{p+k, q}}\\
    &=\prod_{ j=1}^{n_2} Z_j  
    \prod_{k=1}^m\exp\inner{c_k t^{p+q+k+1}A_{p+k, q}}
  \end{split}
  \end{equation*}
   with $Z_j\in\set{\exp\inner{\eps_{1} tY}, \cdots,
  \exp\inner{\eps_{n_1} tY},
   \exp\inner{t^{q+1}A_{0,q}}}$. The last equality holds because $A_{p,q}$ are functions and hence
  commutative. Then $$\exp\inner{t^{p+q+1}A_{p,q}}-{\rm Id}$$   with ${\rm Id}$ the identity
  operator, can be rewritten as
  \begin{equation*}
  \begin{split}
    &\prod_{k=1}^{n_2} Z_k\sum_{i=1}^m\prod_{ j=1}^{i-1}
    \exp\biginner{c_j t^{q+j+1}A_{p+j, q}}
    \big(\exp\biginner{c_i t^{q+i+1}A_{p+i, q}}-{\rm
    Id}\big) \\
    &+\sum_{i=1}^{n_2}\prod_{ j=1}^{i-1} Z_j\inner{Z_i-{\rm Id}},
  \end{split}
  \end{equation*}
  which, together with fact that the norms  of operators $Z_j$ are smaller than $1$,
  implies
  \begin{multline} \label{10101705}
    \norm{\exp\inner{t^{p+q+1}A_{p,q}}u-u}_{L^2}^2\leq C 
    \sum_{1\leq k\leq m}\norm{\exp\inner{c_k t^{k+q+1}A_{p+k,
    q}}u-u}_{L^2}^2\\
    +C\sum_{\eps_I\in\big\{\eps_1,\cdots,\eps_{n_1}\big\}} \norm{\exp\inner{\eps_{I} tY}u-u}_{L^2}^2
     +C\norm{\exp\inner{t^{q+1}A_{0,q}}u-u}_{L^2}^2.
  \end{multline}
  Since the functions $i  A_{p,q}$ are real-valued and hence
  self-adjoint as operators acting on $L^2$. So by the
  spectral theorem the norm
  $\norm{|i  A_{p,q} |^{\frac{1}{p+q+1}}u}_{L^2}$  is equivalent to
  \[
     (p+q+1)^{-1}\int_0^\infty
     \norm{\exp\inner{
     t^{p+q+1} A_{p,q}}u-u}_{L^2}^2  \frac{dt}{t^{3}}.
  \]
  As a result, integrating both sides of \eqref{10101705} with respect to $t$ with the measure $t^{-3}dt$
  gives that
  \begin{equation*}
    \norm{\abs{A_{p,q}}^{\frac{1}{p+q+1}} u}_{L^2}\leq C
    \sum_{k=1}^m\norm{\abs{A_{p+k,q}}^{\frac{1}{p+k+q+1}} u}_{L^2}\\
    +C\norm{Y u}_{L^2}
    +C\norm{\abs{A_{0,q}}^{\frac{1}{q+1}}u}_{L^2}.
  \end{equation*}
  Using reverse induction starting from $p=m$ and combing \eqref{p0},
  we get the desired estimate \eqref{pq0}, completing the proof.
\end{proof}

\begin{lemma}
  There exist two  constants $C_*\geq 1$ and $r_0\leq 1$ depending only on $m$,  such that for any $y\in \mathcal N_1$ and $x\in \mathcal N_2$  with $\mathcal N_j$ defined by \eqref{n1n2}, we have
  \begin{equation}\label{10101401}
    \forall~x_*,\tilde x\in \mathbb R, ~\abs{x_*-\tilde x}
    M_{1}(x_*,y)<r_0 \Longrightarrow C_*^{-1}  \leq \frac{ M_{1}(x_*,y)}{M_{1}(\tilde
    x,y)}\leq C_* ,
  \end{equation}
  \begin{equation}\label{10101401+}
    \forall~y_*,\tilde y\in \mathbb R, ~\abs{y_*-\tilde y} M_{2}(x,y_*)<r_0
    \Longrightarrow  C_*^{-1} \leq \frac{M_{2}(x,y_*)}{M_{2}( x,\tilde y)}\leq C_*.
  \end{equation}
 And for any $x\in\mathbb R,$
  \begin{equation}\label{10101401++}
    \forall~y_*,\tilde y\in \mathbb R, ~\abs{y_*-\tilde y} G(x,y_*)<r_0
    \Longrightarrow  C_*^{-1} \leq \frac{ G (x,y_*)}{G ( x,\tilde y)}\leq C_*.
  \end{equation}
\end{lemma}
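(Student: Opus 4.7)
The plan is to prove the three doubling estimates by one common Taylor-expansion argument; I carry out the details for $M_1$ and indicate the symmetric treatment of $M_2$ and $G$ at the end. Fix $y\in\mathcal{N}_1$ and, for each $0\leq j\leq\ell$, expand the polynomial $\tilde x\mapsto \partial_x^{j+1}\varphi(\tilde x,y)$ at the base point $x$:
\begin{equation*}
\partial_x^{j+1}\varphi(\tilde x,y)=\sum_{k=0}^{\ell-j}\frac{(\tilde x-x)^k}{k!}\,\partial_x^{j+1+k}\varphi(x,y).
\end{equation*}
Taking absolute values, multiplying by $\lambda$, raising to $1/(j+1)$, and writing $a_n:=\abs{\lambda\partial_x^{n+1}\varphi(x,y)}^{1/(n+1)}$ (so $a_n\leq M_1(x,y)$), I use the identity $(j+k+1)/(j+1)=1+k/(j+1)$ to rewrite each factor on the right as
\begin{equation*}
\abs{\lambda\partial_x^{j+k+1}\varphi(x,y)}^{1/(j+1)}=a_{j+k}^{(j+k+1)/(j+1)}\leq a_{j+k}\,M_1(x,y)^{k/(j+1)}.
\end{equation*}
Substituting and summing in $j$ yields
\begin{equation*}
M_1(\tilde x,y)\leq C(m)\sum_{0\leq j+k\leq \ell}\biginner{\abs{\tilde x-x}\,M_1(x,y)}^{k/(j+1)}a_{j+k}.
\end{equation*}
Whenever $\abs{\tilde x-x}M_1(x,y)\leq 1$ every parenthesised factor is $\leq 1$, so collecting terms gives the one-sided bound
\begin{equation*}
M_1(\tilde x,y)\leq C(m)\,M_1(x,y).
\end{equation*}

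The second step bootstraps this into the two-sided estimate \eqref{10101401}. Interchanging the roles of $x$ and $\tilde x$ in the very same Taylor identity produces, symmetrically, $M_1(x,y)\leq C(m)\,M_1(\tilde x,y)$ whenever $\abs{\tilde x-x}M_1(\tilde x,y)\leq 1$. I would then choose $r_0:=\bigl(2C(m)\bigr)^{-1}\leq 1$ and assume $\abs{\tilde x-x}M_1(x,y)<r_0$. The one-sided bound first gives $M_1(\tilde x,y)\leq C(m)M_1(x,y)$, which in turn forces $\abs{\tilde x-x}M_1(\tilde x,y)\leq C(m)r_0=\tfrac12<1$, and the reverse bound then delivers $M_1(x,y)\leq C(m)M_1(\tilde x,y)$. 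Setting $C_*:=C(m)$ completes \eqref{10101401}; the hypothesis $y\in\mathcal{N}_1$ is what keeps $M_1(\cdot,y)$ strictly positive so the ratio is meaningful.

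Estimate \eqref{10101401+} is obtained by the same argument with $x$ and $y$ interchanged, using $x\in\mathcal{N}_2$ to ensure positivity of $M_2(x,\cdot)$. Estimate \eqref{10101401++} for $G$ follows in exactly the same way, expanding each mixed derivative only in the $y$-variable,
\begin{equation*}
\partial_x^i\partial_y^j\varphi(x,\tilde y)=\sum_{k}\frac{(\tilde y-y_*)^k}{k!}\,\partial_x^i\partial_y^{j+k}\varphi(x,y_*),
\end{equation*}
and using the analogous reduction $a_{i,j+k}^{(i+j+k)/(i+j)}\leq a_{i,j+k}\,G(x,y_*)^{k/(i+j)}$ with $a_{i,j+k}:=\abs{\lambda\partial_x^i\partial_y^{j+k}\varphi(x,y_*)}^{1/(i+j+k)}$, followed by the same bootstrap.

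I expect no conceptual obstacle; the only real work is exponent bookkeeping. All constants arising from binomial coefficients, the number of summands, and the powers $(j+k+1)/(j+1)$ must be controlled purely in terms of $m$, and the bootstrap must be closed with a single choice of $r_0$ depending only on $C(m)$. Notably, no information from the sign-change hypotheses of Definition \ref{def} enters at this stage—the lemma is a purely algebraic doubling property of the polynomial symbols $M_1$, $M_2$, $G$, of the same flavour as the standard metric estimates for Nagel–Stein–Wainger balls.
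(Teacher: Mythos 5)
Your argument is correct and is essentially the paper's own proof: a Taylor expansion of $\partial_x^{j+1}\varphi$ at the base point, the bound of each term by powers of $M_1(x_*,y)$ under the smallness condition $\abs{x_*-\tilde x}M_1(x_*,y)<1$ to get the one-sided inequality $M_1(\tilde x,y)\leq C(m)M_1(x_*,y)$, and then the same bootstrap with a small $r_0$ (the paper takes $(\ell+2)^2 r_0<1$) to symmetrize, with \eqref{10101401+} and \eqref{10101401++} handled by the identical computation. The only cosmetic difference is that you take $1/(j+1)$-th roots term by term (using subadditivity of $t\mapsto t^{1/(j+1)}$) while the paper first bounds the whole sum by $(\ell+2)M_1(x_*,y)^{j+1}$; both yield the same constants depending only on $m$.
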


\begin{proof}
  We  firstly show \eqref{10101401}, and claim that      for  any $y\in \mathcal N_1$
    we have, with each $  0\leq j\leq \ell$ with  $\ell$ the integer given in \eqref{10101520},
  \begin{equation}\label{10101402}
 \forall\ \abs{x_*-\tilde x} M_{1}(x_*,y)<1,\quad \abs{\lambda \partial_x^{j+1} \varphi(\tilde x,y)}  \leq
    (\ell+2) M_{1}(x_*,y)^{j+1}.
  \end{equation}
     In fact, by
 Taylor'expansion
  $$
    \partial_x^{j+1} \varphi(\tilde x,y)=
    \partial_x^{j+1}\varphi(x_*,y)+\sum_{i=0}^{\ell-j}
    \frac{\partial_x^{i+j+1}\varphi(x_*,y)}{i!}(\tilde
    x-x_*)^i,
  $$
 we have
  $$
    \abs{\lambda \partial_x^{j+1} \varphi(\tilde x,y)} \leq
    M_{1}(x_*,y)^{j+1}+ \sum_{i=0}^{\ell-j} M_{1}(x_*,y)^{i+j+1}\abs{\tilde
    x-x_*}^i,
  $$ which yields the assertion
  \eqref{10101402}. As a result it follows from \eqref{10101402} that 
  \begin{equation}\label{ies}
  \forall\  x_*, \tilde x\in\mathbb R\  \textrm{with}\,\abs{x_*-\tilde x} M_{1}(x_*,y)<1,\quad	M_{1}(\tilde x,y)\leq (\ell+2)^2M_{1}(x_*,y).
  \end{equation}
  Now for any $x_*$ and $\tilde x$ with $ \abs{x_*-\tilde x} M_{1}(x_*,y)<r_0\leq 1$,  we use \eqref{ies} to get 
  $$\abs{x_*-\tilde x} M_{1}(\tilde x,y)\leq (\ell+2)^2 \abs{x_*-\tilde x} M_{1}(x_*,y)<(\ell+2)^2r_0.$$
  As a result, if  we choose $r_0$ sufficiently small such that  $(\ell+2)^2r_0<1$ then we use \eqref{ies} to conclude
  \begin{eqnarray*}
  	M_{1}( x_*,y)\leq (\ell+2)^2M_{1}(\tilde x,y).
  \end{eqnarray*}
 We have proven \eqref{10101401}.  
  The inequalities \eqref{10101401+} and \eqref{10101401++} can be deduced similarly. This completes the proof.
\end{proof}

\begin{lemma}\label{MH}
  Let $P_m$ be the set of all polynomials with degree  
  $\leq m$, and let  $r\in C^1\inner{\omega; ~\mathbb
  R}$ be a given function defined in $\omega$.
  Then there
  exists a number $\delta>0$ and a constant  $C>0$, both depending only on $m$,
   such that the
  following estimate 
  \begin{equation}\label{10101406}
    \norm{M_2u}_{L^2\inner{I\times ]-\delta,\delta[}}\leq
    C\norm{\mathcal L_2 u}_{L^2}
  \end{equation}
  holds for all $u\in C_0^\infty\inner{]-\delta,\delta[\,\times\,]-\delta,\delta[}$ and for all $\varphi\in P_m$ satisfying
  that for each $s\in\omega$ the function
  $y\longmapsto \partial_y\varphi\inner{r(s),  y}$
  has no sign change from $-$ to $+$  for increasing $y$, where  $I$ stands for the range of $r$, i.e.,  $I=r(\omega)$.
\end{lemma}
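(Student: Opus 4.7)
The plan is to reduce the claimed two-dimensional estimate \eqref{10101406} to a family of one-variable scalar subelliptic estimates parametrized by $x \in I$.

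Fix an arbitrary $x \in I = r(\omega)$, so that $x = r(s)$ for some $s \in \omega$. The hypothesis of the lemma says precisely that the one-variable function
\[
  y \longmapsto \partial_y\varphi(x,y) = \partial_y\varphi(r(s),y)
\]
is a polynomial in $y$ of degree at most $m$ which does not change sign from $-$ to $+$ as $y$ increases; that is, it satisfies condition $(\overline{\Psi})$. Since $u \in C_0^\infty\inner{]-\delta,\delta[\,\times\,]-\delta,\delta[}$, the slice $v := u(x,\cdot)$ lies in $C_0^\infty(]-\delta,\delta[)$, and on this slice $\mathcal L_2$ becomes the one-variable scalar semiclassical model
\[
  \mathcal L_2 v(y) = v'(y) + \lambda\,\partial_y\varphi(x,y)\,v(y).
\]

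For this one-dimensional operator under condition $(\overline{\Psi})$, classical scalar \emph{maximal} subellipticity (see \cite{MR0290201}, \cite[Chapter 27]{MR781536}, and the exposition in \cite{MR2987644}) furnishes a number $\delta > 0$ and a constant $C > 0$, both depending only on $m$, such that
\[
  \norm{M_2(x,\cdot)\,v}_{L^2(]-\delta,\delta[)} \leq C\,\norm{\mathcal L_2 v}_{L^2(]-\delta,\delta[)},
\]
uniformly in $\lambda > 0$, in $x \in I$, and over the class of polynomials in $y$ of degree at most $m$ satisfying $(\overline{\Psi})$.

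Squaring the frozen-$x$ estimate and integrating with respect to $x \in I$ via Fubini then yields
\[
  \norm{M_2 u}_{L^2(I\times]-\delta,\delta[)}^2
  = \int_I \norm{M_2(x,\cdot)\,u(x,\cdot)}_{L^2(]-\delta,\delta[)}^2 \, dx
  \leq C^2 \int_I \norm{\mathcal L_2 u(x,\cdot)}_{L^2(]-\delta,\delta[)}^2 \, dx
  \leq C^2 \norm{\mathcal L_2 u}_{L^2}^2,
\]
which is the desired inequality \eqref{10101406}.

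The main obstacle is the uniformity claim in the middle step: securing the one-variable maximal estimate $\norm{M_2 v}_{L^2} \leq C\,\norm{\mathcal L_2 v}_{L^2}$ with a constant depending only on the polynomial degree $m$, not on the individual polynomial $y\mapsto\partial_y\varphi(x,y)$. This can be obtained either by invoking the classical Egorov--H\"ormander scalar theory under $(\overline{\Psi})$ directly, or more self-containedly by adapting the Baker--Campbell--Hausdorff commutator chain already used in Lemma \ref{legu} to the one-variable setting, where $(\overline{\Psi})$ is precisely what is needed to sign-control the obstruction produced by the iterated commutators of $\partial_y$ with $\lambda\,\partial_y\varphi$.
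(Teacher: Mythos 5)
Your outer reduction is the same as the paper's (freeze $x\in r(\omega)$, prove a one-variable estimate for $\partial_y+\lambda\,\partial_y\varphi(x,\cdot)$ under condition $(\overline\Psi)$, then integrate in $x$ by Fubini), but the step you label as ``the main obstacle'' is not a quotable fact --- it is the entire content of the lemma, and neither of your two suggested ways of securing it works as stated. The classical scalar results under $(\overline\Psi)$ (Treves, H\"ormander Ch.~27) give subelliptic gains of the form $\lambda^{1/(k+1)}$ for a \emph{fixed} finite-type symbol; they do not give the weighted maximal bound $\norm{M_2(x,\cdot)v}_{L^2}\leq C\norm{\mathcal L_2 v}_{L^2}$ with the weight $M_2$ built from \emph{all} $y$-derivatives of $\varphi$, with $C$ depending only on the degree $m$ and uniform in $x$ as the frozen polynomial $y\mapsto\partial_y\varphi(x,y)$ degenerates (coefficients vanishing or becoming unbalanced as $x$ ranges over $I$), nor uniformly in $\lambda$: even in the paper the absorption of a lower-order term forces $\lambda\geq\lambda_0(m)$, so your claim of uniformity for all $\lambda>0$ is already a warning sign that the middle step cannot simply be cited. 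Your fallback via the Baker--Campbell--Hausdorff chain of Lemma \ref{legu} also fails in principle: that machinery is sign-blind and only bounds weighted norms by the maximal quantities $\norm{\partial_y u}+\norm{\lambda(\partial_y\varphi)u}+\norm{u}$; it cannot exploit $(\overline\Psi)$ to remove the $\norm{M_2u}$-type terms from the right-hand side, which is exactly what \eqref{10101406} requires.

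What the paper actually does to produce the uniform one-dimensional estimate is a pointwise rescaling argument: after discarding the measure-zero set $r(\omega)\setminus\mathcal N_2$, for each $y_0$ one rescales by $M_2(x_0,y_0)$, checks that the rescaled polynomial $\zeta(x_0,\cdot)$ has all derivatives bounded by $1$ and some $k$-th derivative bounded below by a constant $c_0(m)$ on a fixed interval $]-\delta,\delta[$ with $\delta=\delta(m)$, applies the quantitative $(\overline\Psi)$ lemmas of Lerner (Lemma \ref{lemsig}) to get a $\lambda^{1/(k+1)}$ gain with constants depending only on $m$, extends to non-compactly supported slices by a cutoff, transfers the estimate back by the change of variables, replaces the frozen weight $M_2(x_0,y_0)$ by $M_2(x_0,y)$ using the slowly-varying property \eqref{10101401+}, integrates in $y_0$ with a Fubini/covering comparison of the intervals $I_{y_0},J_{y_0}$, and finally absorbs the error term by taking $\lambda\geq\lambda_0(m)$. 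None of these ingredients appears in your proposal, so as written there is a genuine gap: the frozen-$x$ estimate you invoke must itself be proved, essentially by the above scheme.
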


\begin{remark}
  If $r=Id$ then this is just a local version of the result of Nourrigat (see Theorem 2.1 of \cite{MR1098615}).
\end{remark}

To prove the above lemma we need the following
\begin{lemma} 
	[Lemmas 3.1.1-3.1.3 of  \cite{MR2599384}] \label{lemsig} Let $[a, b]\subset\mathbb R$ be a given interval, and let  $\phi(s)\in C^0\inner{[a,b];\,\mathbb R}$ such that  $\phi$ does not change sign from $-$ to $+$ as $s$ increases in $[a,b]$. Then for any $v\in C^1\inner{[a,b]}$ with $v(a)=0$ we have  
	\begin{eqnarray*}
		\max_{s\in[a,b]}\abs{v(s)}^2+2\int_a^b\abs{\lambda\phi(s)}\abs{v(s)}^2ds\leq \abs{v(b)}^2+2\int_a^b\Big|\big(\frac{d}{ds}+\lambda\phi\big)v(s)\Big|\abs{v(s)}ds,
	\end{eqnarray*}
	where $\lambda>0$ is a given number.  In particular, for any for any $v\in C_0^1\inner{[a,b]}$  we have  
	\begin{eqnarray*}
		\max_{s\in[a,b]}\abs{v(s)}^2+2\int_a^b\abs{\lambda\phi(s)}\abs{v(s)}^2ds\leq  2\int_a^b\Big|\big(\frac{d}{ds}+\lambda\phi\big)v(s)\Big|\abs{v(s)}ds.
	\end{eqnarray*} 
Moreover if 
	\begin{eqnarray*}
		\min_{s\in[a,b]}\Big|\frac{d^k\phi}{ds^k}(s)\Big|\geq c_*
	\end{eqnarray*}
	for some   $k\in\mathbb Z_+$ and some constant $c_*>0$, then for any $v\in C_0^1\inner{[a,b]}$, we have
	\begin{eqnarray*}
\lambda^{\frac{1}{k+1}}\int_a^b  \abs{v(s)}^2ds \leq  C\Big(\max_{s\in[a,b]}\abs{v(s)}^2+\int_a^b\abs{\lambda\phi(s)}\abs{v(s)}^2ds\Big),
	\end{eqnarray*}
	with $C$ a constant depending only on $k$ and $c_*$ above. 
\end{lemma}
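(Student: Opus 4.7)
The plan is to exploit the sign condition by introducing a change-of-sign point $c\in[a,b]$, defined e.g.\ as $c:=\sup\{s\in[a,b]:\phi\geq 0\text{ on }[a,s]\}$, so that $\phi\geq 0$ on $[a,c]$ and $\phi\leq 0$ on $[c,b]$; this decomposition is available precisely because $\phi$ does not change sign from $-$ to $+$ as $s$ increases. The computational backbone is the pointwise identity
\begin{equation*}
\frac{d}{ds}|v(s)|^{2}+2\lambda\phi(s)|v(s)|^{2}=2\,\mathrm{Re}\Bigl[\Bigl(\tfrac{d}{ds}+\lambda\phi\Bigr)v(s)\cdot\overline{v(s)}\Bigr],
\end{equation*}
which follows from expanding $|v|^{2}=v\bar v$ and using that $\phi$ is real-valued.

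On $[a,c]$ I would use $|\lambda\phi|=\lambda\phi$ and integrate the identity forward from $a$; the boundary condition $v(a)=0$ gives
\begin{equation*}
|v(s)|^{2}+2\int_{a}^{s}|\lambda\phi||v|^{2}\,d\tau\leq 2\int_{a}^{s}\Bigl|\Bigl(\tfrac{d}{d\tau}+\lambda\phi\Bigr)v\Bigr||v|\,d\tau,\qquad s\in[a,c].
\end{equation*}
On $[c,b]$ I would use $|\lambda\phi|=-\lambda\phi$ and integrate backward from $b$, obtaining
\begin{equation*}
|v(s)|^{2}+2\int_{s}^{b}|\lambda\phi||v|^{2}\,d\tau\leq |v(b)|^{2}+2\int_{s}^{b}\Bigl|\Bigl(\tfrac{d}{d\tau}+\lambda\phi\Bigr)v\Bigr||v|\,d\tau,\qquad s\in[c,b].
\end{equation*}
Summing the first inequality at $s=c$ and the second at $s=c$ produces $2|v(c)|^{2}+2\int_{a}^{b}|\lambda\phi||v|^{2}\leq |v(b)|^{2}+2\int_{a}^{b}|(\tfrac{d}{d\tau}+\lambda\phi)v||v|$, which yields the integral part after dropping the bonus $2|v(c)|^{2}$. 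Applying whichever of the two one-sided estimates contains the maximizer $s^{*}$ gives $\max|v|^{2}\leq|v(b)|^{2}+2\int_{a}^{b}|(\tfrac{d}{d\tau}+\lambda\phi)v||v|$; then merging the max and integral bounds using the spare $|v(c)|^{2}$ terms delivers the stated max-plus-integral inequality. The compactly supported case is immediate on setting $v(b)=0$.

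For the last assertion I would invoke a Malgrange-type sublevel estimate: since $|\phi^{(k)}|\geq c_{*}$ on $[a,b]$, one has $|\{s\in[a,b]:|\phi(s)|<\eta\}|\leq C\eta^{1/k}$ for every $\eta>0$. Splitting the integral over the level sets gives
\begin{equation*}
\int_{a}^{b}|v|^{2}\,ds\leq C\eta^{1/k}\max_{[a,b]}|v|^{2}+(\lambda\eta)^{-1}\int_{a}^{b}|\lambda\phi||v|^{2}\,ds,
\end{equation*}
and optimizing in $\eta$ (or applying Young's inequality with conjugate exponents $(k+1)/k$ and $k+1$) yields the claimed $\lambda^{1/(k+1)}$-factor.

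The main obstacle is the constant bookkeeping in combining the two one-sided estimates into the stated max$+$integral inequality, since the maximizer $s^{*}$ and the sign-change point $c$ need not coincide, and $c$ itself need not be unique when $\phi$ vanishes on subintervals. The trick is to route the integration through $c$ so that the $|v(c)|^{2}$ contributions collected from both sides absorb any cross terms. The Malgrange step is standard once the sublevel-set estimate, a consequence of iterated mean-value-theorem applications under the lower bound on $|\phi^{(k)}|$, is in hand.
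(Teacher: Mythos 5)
Your overall strategy---splitting $[a,b]$ at the last point $c$ up to which $\phi\ge 0$, using the identity $\frac{d}{ds}|v|^2+2\lambda\phi|v|^2=2\,\mathrm{Re}\big[\big(\tfrac{d}{ds}+\lambda\phi\big)v\cdot\overline v\big]$, integrating forward from $a$ on $[a,c]$ and backward from $b$ on $[c,b]$, and finishing the last assertion with the sublevel-set bound $|\{s:|\phi(s)|<\eta\}|\le C(k)(\eta/c_*)^{1/k}$ together with the choice $\eta=\lambda^{-k/(k+1)}$---is exactly the argument behind Lerner's Lemmas 3.1.1--3.1.3, which is all the paper itself offers as proof. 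Your two one-sided inequalities, the bound $2\int_a^b|\lambda\phi||v|^2ds\le |v(b)|^2+2\int_a^b|(\tfrac{d}{ds}+\lambda\phi)v||v|ds$ obtained by adding them at $s=c$, the separate bound $\max_{[a,b]}|v|^2\le |v(b)|^2+2\int_a^b|(\tfrac{d}{ds}+\lambda\phi)v||v|ds$, and the whole treatment of the $\lambda^{1/(k+1)}$ assertion are correct.

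The gap is precisely the step you flagged: ``merging the max and integral bounds using the spare $|v(c)|^2$ terms'' cannot yield the displayed inequality with these constants, because that inequality is false as stated. Take $[a,b]=[0,1]$, $\lambda=1$, $\phi\equiv M$ with $M\ge 3$ (no sign change at all) and $v(s)=se^{-Ms}$, so $v(0)=0$ and $(\tfrac{d}{ds}+\phi)v=e^{-Ms}$. Integration by parts gives $2M\int_0^1 s^2e^{-2Ms}ds=2\int_0^1 se^{-2Ms}ds-e^{-2M}$, so the claimed inequality reduces to $\max_{[0,1]}s^2e^{-2Ms}=e^{-2}/M^2\le 2e^{-2M}$, which fails for $M\ge 3$; the point is that when $\phi\ge0$ and $v,\ (\tfrac{d}{ds}+\lambda\phi)v\ge0$ the energy identity is saturated, so the weighted integral alone already exhausts the right-hand side and no room is left for the max (a cutoff version of the same example defeats the compact-support display with constant $2$ as well). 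What your computations do prove is each of the two bounds separately with the stated right-hand side, hence their sum with right-hand side $2|v(b)|^2+4\int_a^b|(\tfrac{d}{ds}+\lambda\phi)v||v|ds$; this weaker, correct form with constants $2$ and $4$ is exactly what is invoked later (see Case (a) in the proof of Theorem \ref{MainT2} and the proof of Lemma \ref{MH}), so nothing downstream is affected---but you should state and prove that version rather than claim an absorption that cannot happen. Also note the minor point that your $c=\sup\{s:\phi\ge 0\ \text{on}\ [a,s]\}$ must be read as $c=a$ when $\phi(a)<0$.
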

\begin{proof}
	The just follows from the argument for proving     \cite [Lemma 3.1.1]{MR2599384} and the assertions in   \cite [Lemmas 3.1.2-3.1.3]{MR2599384}.  
	\end{proof}

\begin{proof}[Proof of Lemma \ref{MH}] Recall  $\mathcal N_2$ is defined by \eqref{n1n2}. 
  Since $r(\omega)\setminus\mathcal N_2$ has measure $0$ then we may assume without loss of generality that  $r(\omega)\subset\mathcal
  N_2$. 
  For $x_0=r(s_0)\in r(\omega)\subset \mathcal
  N_2$,  $y_0\in \mathbb R$ and $\varphi\in P_m$
  we define a new function
  $\zeta(x_0,\cdot)$ by setting
  \[
    \zeta(x_0,y)=\frac{ (\partial_y \varphi)\inner{x_0,
    y_0+\frac{y}{M_{2}(x_0,y_0)}}}{M_{2}(x_0,y_0)}.
  \]
  Then we use Taylor's expansion to write
     \begin{eqnarray*}
  	    \zeta(x_0,y)=\sum_{0\leq j\leq d}\frac{  \partial_y^{j+1} \varphi\inner{x_0,
    y_0}}{M_{2}(x_0,y_0)^{j+1}}\frac{y^j}{j!},
  \end{eqnarray*}
 where the integer $d$ is given in \eqref{10101520}.  By direct verification, 
  \begin{equation*} 
  \forall\ 0\leq j \leq d,\quad\frac{ \abs{  \partial_y^{j+1} \varphi\inner{x_0,
    y_0}}}{M_{2}(x_0,y_0)^{j+1}}\leq 1	,
  \end{equation*}
  which yields
  \begin{equation}\label{ues}
  \forall\ 0\leq j \leq d,\quad\abs{ \partial_y^{j} \zeta\inner{x_0,
    0}}\leq 1.
  \end{equation}
  On the other hand, we can find an integer $k$ with $0\leq k\leq d\leq m$ such that
  \begin{eqnarray*}
  	\abs{ \partial_y^{k+1} \varphi\inner{x_0,
    y_0}}^{1/(k+1)} =\max_{0\leq j\leq d}\abs{ \partial_y^{j+1} \varphi\inner{x_0,
    y_0}}^{1/(j+1)} .
  \end{eqnarray*}
  Thus
  \begin{equation*}
    \abs{\inner{\partial_y^k\zeta}
    \inner{x_0,0}}=\frac{ \partial_y^{k+1} \varphi\inner{x_0,
    y_0}}{M_{2}(x_0,y_0)^{k+1}}\geq c_0 
  \end{equation*}
   for some constant $c_0>0$ depending only on $m$.
This, with \eqref{ues}, implies 
    $$\abs{\inner{\partial_y^k\zeta}
    \inner{x_0,y}}\geq  c_0 -\sum_{1\leq j\leq d-k} \abs{y}^{j}. $$
  As a result one can find a number $\delta>0$  depending only on
  $m$,  such that
  \begin{equation}\label{10101405}
    \inf_{y\in]-\delta,\delta[} \abs{\inner{\partial_y^k\zeta}
    \inner{x_0,y}}\geq c_0/2.
  \end{equation}
  Moreover by assumption $\partial_y\varphi(x_0,y)$ has no sign change from $-$ to $+$ for increasing $y$.
  Then the function
  $\zeta(x_0,y)$ doesn't change sign from $-$ to $+$ as $y$
  increases in $]-\delta,\delta[$. As a result we apply Lemma \ref{lemsig} with $\phi=\zeta$ to conclude    for any $v\in C_0^\infty(]-\delta,\delta[)$ we have
  \begin{equation*}\label{10101501}
    \lambda^{\frac{1}{k+1}}\int_{-\delta}^\delta \abs{v(y)}^2dy \leq
     C \int_{-\delta}^\delta
    \abs{(\partial_y+\lambda \zeta(x_0,y))v(y)} ^2dy+C\int_{-\delta}^\delta\abs{v(y)}^2
     d y.
  \end{equation*}
   Inspired by \cite{MR781536},  for any function $v\in C^\infty\inner{[-\delta,\delta]}$  having no compact
  support in $]-\delta,\delta[$, we can introduce a cut-off function
  $\chi(y)$ supported in the interval $]-\delta,\delta[$  and equal to $1$ in
  $]-\delta/2,\delta/2[$.
  Then applying the above inequality to the function $\chi(y)  v(y)$ gives
  that, for any $v\in C^\infty([-\delta,\delta])$,
    \begin{equation}\label{10101502}
    \lambda^{\frac{1}{k+1}}\int_{-{\delta\over2}}^{\delta\over 2}
     \abs{v(y)}^2 d y
    \leq
     C \int_{-\delta}^\delta
    \abs{\inner{\partial_y + \lambda \zeta\inner{x_0,y}}v(y)}^2 
   dy+C \int_{-\delta}^\delta \abs{v(y)}^2 d y.
  \end{equation}
 Now for any given  $u\in C_0^\infty\inner{]-\delta,\delta[\,\times\,]-\delta,\delta[}$ we define 
 $v\in C^\infty([-\delta,\delta])$ by setting
  \[
    v(y)=u\Big(x_0, y_0+\frac{y}{M_{2}(x_0,y_0)}\Big).
  \]
  For such a function $v$ we can verify that
  \[
    \big(\partial_y+ \lambda \zeta(x_0,y_0)\big)v (y)= \frac{(\mathcal L_2 u)\inner{x_0, y_0+y/M_{2}(x_0,y_0)}}{M_{2}(x_0,y_0)}.
  \]
  As a result, applying \eqref{10101502} to the function $v$ above   and then using the change of variables, we get
  \begin{multline*}
     \lambda^{\frac{1}{k+1}}\int_{I_{y_0}}
  M_{2}(x_0,y_0)^3 \abs{u(x_0,y)}^2d y\\     \leq
    C\int_{J_{y_0}} M_{2}(x_0,y_0)
    \abs{(\mathcal L_2 u)(x_0,y)}^2
     d y  + C \int_{J_{y_0}}  M_{2}(x_0,y_0)^3 \abs{  u(x_0,y)}^2
     d y,
  \end{multline*}
  where 
  \begin{eqnarray*}
  I_{y_0}=\big\{y ; ~ \abs{y-y_0}M_{2}(x_0,y_0)<\delta/2\big\},\quad J_{y_0}=\big\{y ; ~ \abs{y-y_0}M_2(
  x_0,y_0)<\delta\big\}.
 \end{eqnarray*} 
  By \eqref{10101401+} we see
  $C_*^{-1}\leq  M_2(x_0,y_0)/M_{2}(x_0,y)\leq C_*$
  when $y\in J_{y_0}$,  shrinking $\delta$ if necessary. Then we may replay $M_{2}(x_0,y_0)$ by
  $M_{2}(x_0,y)$ in the above integrands, that is,
\begin{multline*}\label{m2es}
     \lambda^{\frac{1}{k+1}}\int_{I_{y_0}}
  M_{2}(x_0,y)^3 \abs{u(x_0,y)}^2d y\\     \leq
    C\int_{J_{y_0}} M_{2}(x_0,y)
    \abs{(\mathcal L_2 u)(x_0,y)}^2
     d y  + C \int_{J_{y_0}}  M_{2}(x_0,y)^3 \abs{  u(x_0,y)}^2
     d y.
  \end{multline*}
  Thus taking integration with respect to $y_0\in\mathbb R$ on the both sides of above inequality and  observing  \begin{eqnarray*}
  	\set{y ; ~
  \abs{y-y_0}M_2(x_0,y)<\delta/(2C_*)}\subset I_{y_0}, \quad    J_{y_0}\subset \set{y ; ~
  \abs{y-y_0}M_2(x_0,y)<C_*\delta },
  \end{eqnarray*}
   we obtain, applying Fubini's theorem,
  \begin{multline*}
    \lambda^{\frac{1}{k+1}}\int_{-\delta}^{\delta}
    \abs{M_{2}(x_0,  y)u(x_0,y)}^2d y \leq
    C\int_{-\delta}^{\delta}
    \abs{(\mathcal L_2 u)(x_0,y)}^2dy \\
     + C \int_{-\delta}^{\delta}  \abs{M_{2}(x_0, y)u(x_0,y)}^2
     d y.
  \end{multline*}
  Note that the constant $C$ is independent of $\lambda$, and hence we
  can choose $\lambda$ large enough such that $\lambda^{\frac{1}{k+1}}\geq
  2C$. This implies
  \begin{equation*}
    \lambda^{\frac{1}{k+1}}\int_{-\delta}^{\delta}
    \abs{M_{2}(x_0,  y)u(x_0,y)}^2d y \leq
    C\int_{-\delta}^{\delta}
    \abs{(\mathcal L_2 u)(x_0,y)}^2dy
  \end{equation*}
  Now integrating  both sides with respect to $x_0\in r(\omega)$ gives the
  desired estimate \eqref{10101406}.
  The proof is hence completed.
\end{proof}

\begin{lemma}
 Let $\delta, \alpha>0$   be two given numbers with $\delta$ small sufficiently and let  $r(y)\in C^1\inner{\omega; ~\mathbb
  R}$ such that  $\big|\frac{dr}{dy}(y)\big|\geq \alpha$ for any 
  $y\in \omega$. Suppose $\varphi\in P_m$ satisfies
  that for each $y_0\in\omega$ the function
  $y\longmapsto (\partial_y\varphi)\inner{r(y_0),  y}$
  has no sign change from $-$ to $+$ for increasing $y$.
  Then  the following esitmate
  \begin{equation}\label{10101505}
    \int_{\omega} G(r(y),y)\abs{u(r(y),y)}^2d y
    \leq C  \sum_{1\leq j\leq 2}\norm{\mathcal L_j u}_{L^2}^2+C
    \norm{M_{1}u}_{L^2}^2
  \end{equation}
  holds for all $u\in C_0^\infty\inner{]-\delta,\delta[\,\times\,]-\delta,\delta[}$.  Recall $G=\sum_{i+j\geq 1}\abs{  \lambda\partial_x^{i}
 \partial_y^{j}\varphi}^{\frac{1}{i+j}}$.
\end{lemma}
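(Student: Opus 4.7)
The plan is to reduce the integral along the curve $\Gamma=\bigset{(r(y),y):y\in\omega}$ to a two-dimensional $L^2$-estimate on the strip $I\times\,]-\delta,\delta[$, where $I=r(\omega)$, and then close everything by combining Lemmas \ref{legu} and \ref{MH}. Since $\abs{r'}\geq\alpha>0$ on $\omega$, the map $y\mapsto r(y)$ is a $C^1$-diffeomorphism of $\omega$ onto $I$ whose inverse $\rho=r^{-1}$ satisfies $\abs{\rho'}\leq\alpha^{-1}$; changing variables gives
\begin{equation*}
\int_\omega G(r(y),y)\abs{u(r(y),y)}^2\,dy \leq \alpha^{-1}\int_I G(x,\rho(x))\abs{u(x,\rho(x))}^2\,dx,
\end{equation*}
so it suffices to control the right-hand integral.

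For the trace step I would exploit the slow variation of $G(x,\cdot)$ provided by \eqref{10101401++}. For each fixed $x\in I$, choose an interval $J_x\subset\,]-\delta,\delta[$ centered at $\rho(x)$ of length comparable to $1/G(x,\rho(x))$, so that $G(x,\rho(x))$ and $G(x,y)$ are comparable for $y\in J_x$. An averaged Sobolev trace reads
\begin{equation*}
\abs{u(x,\rho(x))}^2 \leq \frac{1}{\abs{J_x}}\int_{J_x}\abs{u(x,y)}^2\,dy + 2\int_{J_x}\abs{u}\abs{\partial_y u}\,dy;
\end{equation*}
multiplying by $G(x,\rho(x))$, using $G(x,\rho(x))/\abs{J_x}\lesssim G(x,\rho(x))^2\sim G(x,y)^2$ on $J_x$, integrating in $x\in I$, and applying Cauchy-Schwarz on the last term yields
\begin{equation*}
\int_I G(x,\rho(x))\abs{u(x,\rho(x))}^2\,dx \leq C\norm{Gu}_{L^2(I\times]-\delta,\delta[)}^2 + C\norm{\partial_y u}_{L^2(I\times]-\delta,\delta[)}^2.
\end{equation*}

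To close the estimate, observe that the hypothesis on $\varphi$ and $r$ is exactly the one required by Lemma \ref{MH}, so $\norm{M_2 u}_{L^2(I\times]-\delta,\delta[)}\leq C\norm{\mathcal L_2 u}_{L^2}$. Combining this with $\partial_y u=\mathcal L_2 u-\lambda(\partial_y\varphi)u$ and $\abs{\lambda\partial_y\varphi}\leq M_2$ controls $\norm{\partial_y u}_{L^2(I\times]-\delta,\delta[)}$ by $\norm{\mathcal L_2 u}_{L^2}$. Applying Lemma \ref{legu} to the same strip and again absorbing the $\norm{M_2 u}$-term via Lemma \ref{MH} yields $\norm{Gu}_{L^2(I\times]-\delta,\delta[)}^2\leq C\sum_{1\leq j\leq 2}\norm{\mathcal L_j u}_{L^2}^2+C\norm{M_1 u}_{L^2}^2$, and substituting into the displayed inequality above gives the desired estimate \eqref{10101505}.

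The main obstacle is the trace step. Since $G$ is a sum of terms $\abs{\lambda\partial_x^i\partial_y^j\varphi}^{1/(i+j)}$ with $i+j\geq 2$, it is not classically differentiable at the zeros of the corresponding partial derivatives, so a naive pointwise bound like $\abs{\partial_y G}\lesssim G^2$ is not available; the slow variation \eqref{10101401++} must be used solely as a comparison of values of $G$ at nearby points, which is what the averaged Sobolev trace (rather than an integration by parts in $y$) accomplishes. Edge cases where $G(x,\rho(x))$ is so small that the natural $J_x$ exceeds $]-\delta,\delta[$ are harmless: there $G(x,\rho(x))$ is bounded by a constant depending only on $\delta$ and $m$, and one may replace $J_x$ by $]-\delta,\delta[$ and use the standard unweighted trace inequality.
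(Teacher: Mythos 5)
Your proof is correct, but the key step is genuinely different from the paper's. The paper proves the trace bound along the curve $x=r(y_0)$ by repeating the mechanism of Lemma \ref{MH}: it rescales $\partial_y\varphi(r(y_0),\cdot)$ by $G(r(y_0),y_0)$, invokes the condition-$(\Psi)$ one-dimensional estimate of Lemma \ref{lemsig} (the maximum bound, with a cutoff) to control $\abs{v(0)}^2$, and thereby gets the pointwise inequality $G(r(y_0),y_0)^2\abs{u(r(y_0),y_0)}^2\leq C\int\abs{\mathcal L_2u}^2dy+C\int G^2\abs{u}^2dy$ before integrating in $y_0$ and changing variables $r(y_0)\to x$. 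You instead change variables first and replace the $(\Psi)$-based maximum estimate by an elementary averaged Sobolev trace on an interval $J_x$ of length $\sim 1/G(x,\rho(x))$, using \eqref{10101401++} only as a comparison of values of $G$; the sign hypothesis then enters your argument solely through Lemma \ref{MH}, which you use both to bound $\norm{M_2u}_{L^2(I\times]-\delta,\delta[)}$ (hence $\norm{\partial_y u}_{L^2(I\times]-\delta,\delta[)}$ via $\partial_y u=\mathcal L_2u-\lambda(\partial_y\varphi)u$ and $\abs{\lambda\partial_y\varphi}\leq M_2$) and, exactly as in the paper, to absorb the $M_2$-term coming from Lemma \ref{legu}. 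What each route buys: the paper's argument stays entirely within the $(\Psi)$/rescaling machinery and yields the stronger weight $G^2$ along the curve, while yours is more elementary at the trace step and delivers precisely the first-power weight appearing in \eqref{10101505}. Two small points you should make explicit: in the edge case where $1/G(x,\rho(x))$ exceeds the interval, after the unweighted trace you still need $\norm{u}_{L^2(I\times]-\delta,\delta[)}\leq C\norm{\partial_y u}_{L^2(I\times]-\delta,\delta[)}$ (Poincar\'e in $y$, available since $u(x,\cdot)\in C_0^\infty(]-\delta,\delta[)$) so that this term is also controlled by $\norm{\mathcal L_2u}_{L^2}$; and the diffeomorphism claim for $r$ requires $\omega$ connected (or monotonicity on each component), an assumption the paper's own change of variables $r(y_0)\to x$ uses implicitly as well.
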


\begin{proof}
This proof is quite similar as  Lemma \ref{MH}.  Let $y_0\in\omega $ be given and
we define a new function
  $y\longmapsto \eta(r(y_0),y)$ by setting
  \[
    \eta(r(y_0),y)=\frac{  (\partial_y \varphi )\inner{r(y_0),
    y_0+y/G(r(y_0), y_0)}}{G(r(y_0), 
    y_0)}.
  \]
  Then by the assumption, the function $y\longmapsto \eta\inner{r(y_0), y}$ does not change sign from $-$ to $+$ as   $y$ increases in $]-\delta,\delta[$.  Then we apply Lemma \ref{lemsig} to conclude, for any $v\in C_0^\infty\inner{]-\delta,\delta[}$ we have 
  \begin{eqnarray*}
 \max_{y\in ]-\delta,\delta[}    \abs{v(y)}^2  \leq
   2 \int_{-\delta}^\delta
    \abs{(\partial_y+\lambda \eta(r(y_0),y))v(y)} \abs{v(y)} dy.
     \end{eqnarray*}
     Thus applying  the above inequality to $\chi(y)  v(y)$, with $\chi\in C_0^{\infty}\inner{]-\delta,\delta[}$ and $\chi\equiv 1$ on $[-\delta/2,\delta/2]$, we obtain, for any $v\in C^\infty([-\delta,\delta])$
  \begin{eqnarray*}
 \abs{v(0)}^2\leq \max_{y\in ]-\frac{\delta}{2},\frac{\delta}{2}[}    \abs{v(y)}^2  \leq
   C \int_{-\delta}^\delta
    \abs{(\partial_y+\lambda \eta(r(y_0),y))v(y)}^2dy+C\int_{-\delta}^\delta \abs{v(y)}^2 dy.
     \end{eqnarray*}
  As a result for any $u\in C_0^\infty\inner{]-\delta,\delta[\,\times\,]-\delta,\delta[}$, we apply the above estimate to  
  \[
    v(y):=u\Big(r(y_0), y_0+\frac{y}{G(r(y_0),y_0)}\Big);
  \]
  this yields, by virtue of the change of variables,
\begin{multline*}
    G(r(y_0),y_0)^2\abs{u(r(y_0),y_0)}^2\leq
    C  \int_{K_{y_0}}
    \abs{(\mathcal L_2 u)(r(y_0),y)}^2dy\\
    +C  \int_{K_{y_0}} G(r(y_0),y_0)^2\abs{u(r(y_0),y)}^2
     d y,
  \end{multline*}
   where $K_{y_0}=\set{y ; ~ \abs{y-y_0}G(
  r(y_0),y_0)<\delta}$.
  It follows from \eqref{10101401++}  that
  \begin{eqnarray*}
  	\forall\ y\in K_{y_0},\quad C_*^{-1}G(r(y_0),y)\leq  G(r(y_0),y_0)\leq C_*G(r(y_0),y).
  \end{eqnarray*}
   Then we may replay $G(r(y_0),y_0)$ by
  $G(r(y_0),y)$ in the above integrands. 
  This gives
\begin{multline*}
    G(r(y_0), y_0)^2\abs{u(r(y_0),y_0)}^2\\ \leq
    C  \int_{-\delta}^\delta 
    \abs{(\mathcal L_2 u)(r(y_0),y)}^2dy+C\int_{-\delta}^\delta  G(r(y_0), y) ^2\abs{ u(r(y_0),y)}^2
     d y.
  \end{multline*}
  Integrating both sides
  with respect to $y_0\in\omega$ gives
  \begin{equation*}
  \begin{aligned}
   & \int_{\omega} G(r(y_0),y_0)^2\abs{u(r(y_0),y_0)}^2d y_0\\
   &\leq C  \int_{\omega} \int_{-\delta}^\delta \inner{
    \abs{(\mathcal L_2 u)(r(y_0),y)}^2 +   G(r(y_0), y)^2\abs{ u(r(y_0),y)}^2}
     d ydy_0\\
   &\leq C   
    \norm{ \mathcal L_2 u}_{L^2}^2 +  C\norm{G     u }_{L^2(I\times]-\delta,\delta[)}^2    \end{aligned}
  \end{equation*} 
  with $I=r\inner{\omega}$, 
  the last inequality using the change of variables $r(y_0)\rightarrow x$ and the fact that $\abs{r'(y)}\geq \alpha$ for all $y\in\omega$.    Moreover  we apply Lemma \ref{legu} and Lemma \ref{MH} to conclude
  \begin{eqnarray*}
  	\begin{aligned}
  		\norm{G     u }_{L^2(I\times]-\delta,\delta[)} & \leq  C
    \sum_{1\leq j\leq 2}\norm{\mathcal L_{j} u}_{L^2}
    +C \inner{\norm{M_{1}  u}_{L^2}+
    \norm{M_{2}  u}_{L^2(I\times ]-\delta.\delta[)}}\\& \leq  C
    \sum_{1\leq j\leq 2}\norm{\mathcal L_{j} u}_{L^2}
    +C  \norm{M_{1}  u}_{L^2}. 
  	\end{aligned}
  \end{eqnarray*}
   Combining the above inequalities  gives the desired estimate \eqref{10101505}.
  The proof is completed.
  \end{proof}

\begin{proof}[Proof of Theorem \ref{MainT2}]
  Let $\varphi\in P_m$ satisfy Assumption $\boldsymbol{H_1(\alpha)}$ and let $\omega_j, r_j, 1\leq j\leq N_1,$ be given therein.  For any $y_0\in \omega_j \cap \mathcal N_1$ with $1\leq j\leq N_1$ and
  $x_0\in\mathbb R$ we define a new
  function $x\longmapsto \xi(x,y_0)$ by setting
  \[
     \xi(x, y_0)=\frac{  (\partial_x \varphi)\inner{x_0+\frac{x}{M_{1}(x_0,y_0)},
     y_0}}{M_{1}(x_0,y_0)}.
  \]
  Similar to the proof of \eqref{10101405}, we may find an integer $k$ such that
  \begin{equation}\label{10101508}
    \inf_{x\in]-\delta,\delta[} \abs{\inner{\partial_x^k\xi}
    \inner{x,y_0}}\geq  c_0  /2
  \end{equation}
  with $c_0,\delta$ depending only on $m$. 
  Denote
  $$\mathcal A(y_0)=\set{x\in\mathbb R;  ~\abs{x-r(y_0)}M_{1}(x,y_0)<\delta}.$$
  We have two cases that either $x_0\in\mathcal A(y_0)$ or $x_0\not\in\mathcal
  A(y_0)$.

\noindent {\it Case (a)}.  We  firstly consider the case when $x_0\in\mathcal A(y_0)$, that is,
\begin{equation}\label{eqdel}
	\abs{r(y_0)-x_0}M_{1}(x_0,
  y_0)\leq \delta.
\end{equation}
Since   $\varphi\in P_m$ satisfies Assumption $\boldsymbol{H_1(\alpha)}$ then  the function $ x\longmapsto
  \partial_x\varphi\inner{x,y_0}$ only changes sign from $-$ to $+$
  at $r(y_0)$  for increasing $x$  and the function
  $y\longmapsto \partial_y\varphi\inner{r(y_0),  y} $
  has no sign change from $-$ to $+$ for increasing $y$.
    This with \eqref{eqdel}   implies that  $x\longmapsto \xi(x,y_0)$  only changes sign
  from $-$ to $+$ at the point $\inner{r(y_0)-x_0}M_{1}(x_0,
  y_0)$ for $x$ increases in $]-\delta,\delta[$.   
  Then we apply the  first assertion in Lemma \ref{lemsig} with $[a, b]=[-\delta, \inner{r(y_0)-x_0}M_{1}(x_0,
  y_0) ]$ or $[a, b]=[\inner{r(y_0)-x_0}M_{1}(x_0,
  y_0), \delta]$, to conclude 
  for any $v\in C_0^\infty(-]\delta,\delta[)$,
  \begin{equation*}\label{10101512}
  \begin{aligned}
   &  \sup_{x\in]-\delta,\delta[}\abs{v(x)}^2+2\int_{-\delta}^\delta
     \abs{\lambda\xi(x,y_0)}\abs{v(x)}^2 d x\\
     &\leq
    2\abs{v \big(\inner{r(y_0)-x_0}M_{1}(x_0,y_0)\big)}^2 
     +4\int_{-\delta}^\delta
    \abs{(\partial_x+\lambda\xi(x,y_0))v(x)} \abs{v(x)}
     d x.
  \end{aligned}
  \end{equation*}
  which, with \eqref{10101508}
   and the last assertion in Lemma \ref{lemsig}, yields, for any $v\in C_0^\infty(]-\delta,\delta[)$, 
   \begin{equation*}\label{10101512}
  \begin{aligned}
   &  \lambda^{\frac{1}{k+1}}\int_{-\delta}^\delta
       \abs{v(x)}^2 d x\\
     &\leq
    C\abs{v \big(\inner{r(y_0)-x_0}M_{1}(x_0,y_0)\big)}^2 
     +C\int_{-\delta}^\delta
    \abs{(\partial_x+\lambda\xi(x,y_0))v(x)} \abs{v(x)}
     d x.
  \end{aligned}  
  \end{equation*}
  Then we follow the argument for proving  Lemma \ref{MH} to conclude,  for any $v\in C^\infty([-\delta,\delta])$
  \begin{equation*}\label{10101512}
  \begin{aligned}
   &  \lambda^{\frac{1}{k+1}}\int_{-\delta/2}^{\delta/2}
       \abs{v(x)}^2 d x\\
     &\leq
    C\abs{v \big(\inner{r(y_0)-x_0}M_{1}(x_0,y_0)\big)}^2 
     +C\int_{-\delta}^\delta
    \abs{(\partial_x+\lambda\xi(x,y_0))v(x)} \abs{v(x)}
     d x.
  \end{aligned}  
  \end{equation*}
  Now for any $u\in C_0^\infty(]-\delta,\delta[\,\times\,]-\delta,\delta[) $ we apply the above estimate to
  \begin{eqnarray*}
  	v(x):=u\Big(x_0+\frac{x}{M_1(x_0,y_0)}, y_0\Big);
  \end{eqnarray*}
  this gives,
  for any  $u\in C_0^\infty(]-\delta,\delta[\,\times\,]-\delta,\delta[) $,
  \begin{equation}\label{mla}
  \begin{split}
    &\lambda^{\frac{1}{k+1}}\int_{\tilde I_{x_0}}
   M_{1}(x_0,y_0)^3\abs{u(x,y_0)}^2d x \leq C
   M_{1}(x_0,y_0)^3\abs{u\inner{r(y_0),y_0}}^2\\
    &+C\int_{\tilde J_{x_0}}M_{1}(x_0,y_0) 
    \abs{\mathcal L_1 u(x,y_0)} ^2  
     d x+C \int_{\tilde J_{x_0}} M_{1}(x_0 ,y_0)^3\abs{u(x,y_0)}^2
     d x,
  \end{split}
  \end{equation}
  where 
  \begin{eqnarray*}
  	\tilde I_{x_0}=\big\{x ; ~ \abs{x-x_0}M_{1}(x_0,y_0)<\frac{\delta}{2}\big\},\ \tilde  J_{x_0}=\big\{x ; \  \abs{x-x_0}M_{1}(
  x_0,y_0)<\delta\big\}.
  \end{eqnarray*}
It follows from \eqref{10101401} that
  $C_*^{-1}\leq  M_1(x_0,y_0)/M_{1}(x,y)\leq C_*$
  when   $x\in \tilde J_{x_0}$.     
  Moreover in view of \eqref{eqdel}, 
  \begin{equation*}\label{10101509}
   C_*^{-1} M_{1}(r(y_0), y_0) \leq M_{1}(x_0, y_0)\leq C_* M_{1}(r(y_0), y_0). 
  \end{equation*}
  Consequently  we   replay $M_{1}(x_0,y_0)$ by
  $M_{1}(x,y_0)$ in the above integrands and and  by
  $M_{1}(r(y_0),y_0)$ in the first term on the right side, and then  integrate both side with respect to $x_0\in \mathcal
  A(y_0)$; this gives
  \begin{equation}\label{10101510}
  \begin{aligned}
   & \lambda^{\frac{1}{k+1}}\int_{\mathcal A(y_0)}\int_{\tilde I_{x_0}}
   M_{1}(x,y_0)^3\abs{u(x,y_0)}^2d x dx_0  \leq C
   M_{1}(r(y_0),y_0)^2\abs{u\inner{r(y_0),y_0}}^2\\
    &\quad +C \int_{\mathcal A(y_0)}\int_{\tilde J_{x_0}}
 \Big(M_{1}(x,y_0)   \abs{\mathcal L_1 u(x,y_0)}^2 
  +
    M_{1}(x
    ,y_0)^3\abs{u(x,y_0)}^2 \Big)
     d x dx_0.
     \end{aligned}
  \end{equation}
  Here we have used the fact that $\mathcal A(y_0)\subset\set{x ; ~\abs{x-r(y_0)} M_{1}(r(y_0),y_0)<C_*\delta}$
  and hence the measure of $\mathcal A(y_0)$ is less than
  $2C_*\delta/M_{1}(r(y_0),y_0)$.

 \noindent {\it Case (b)}.   Next we consider the case that $x_0\not\in\mathcal
  A(y_0)$. Then  $$\abs{r(y_0)-x_0}M_{1}(x_0,
  y_0)\geq \delta.$$  This implies $\xi(x,y_0)$  has no sign change
  from $-$ to $+$ as $x$ increases in $]-\delta,\delta[$. Then the
  estimate \eqref{mla} still holds with the absence of the first term on the
  right side. Then repeating the above arguments, we can
  show that, with $x_0$ varying in $\mathcal A^c(y_0)\stackrel{\rm def}{=}\mathbb R\setminus \mathcal
  A( y_0 )$,
  \begin{equation}\label{10101515}
  \begin{aligned}
   & \lambda^{\frac{1}{k+1}}\int_{\mathcal A^c(y_0)}\int_{\tilde I_{x_0}}
   M_{1}(x,y_0)^3\abs{u(x,y_0)}^2d x dx_0  \\
    &\leq C \int_{\mathcal A^c(y_0)}\int_{\tilde J_{x_0}}
 \Big(M_{1}(x,y_0)   \abs{\mathcal L_1 u(x,y_0)}^2 
  +
    M_{1}(x
    ,y_0)^3\abs{u(x,y_0)}^2 \Big)
     d x dx_0.
     \end{aligned}
  \end{equation}

\noindent {\it Completeness of the proof of Theorem \ref{MainT2}}.  In conclusion we  take  sum of \eqref{10101510} and \eqref{10101515}, to get
  \begin{equation*}
  \begin{split}
    &\lambda^{\frac{1}{k+1}}\int_{\mathbb R}\int_{\tilde I_{x_0}}
   M_{1}(x,y_0)^3\abs{u(x,y_0)}^2d x dx_0 \leq C
   M_{1}(r(y_0),y_0)^2\abs{u\inner{r(y_0),y_0}}^2\\
    &+C \int_{\mathbb R}\int_{\tilde J_{x_0}}
    \inner{M_{1}(x,y_0)\abs{\mathcal L_1 u(x,y_0)}^2  +M_{1}(x
    ,y_0)^3\abs{u(x,y_0)}^2}
     d x dx_0,
  \end{split}
  \end{equation*}
  which, with Fubini'theorem as well as the facts  that \begin{eqnarray*}
  \Big\{x ; ~ \abs{x-x_0}M_{1}(x,y_0)<\frac{\delta}{2C_*}\Big\}\subset	\tilde I_{x_0},\ \tilde  J_{x_0}\subset\big\{x ; \  \abs{x-x_0}M_{1}(
  x,y_0)<C_*\delta\big\},
  \end{eqnarray*} 
   implies
    \begin{equation*}
  \begin{split}
    &\lambda^{\frac{1}{k+1}}\int_{-\delta}^{\delta}
   M_{1}(x,y_0)^2\abs{u(x,y_0)}^2d x \leq C
   M_{1}(r(y_0),y_0)^2\abs{u\inner{r(y_0),y_0}}^2\\
    &+C \int_{-\delta}^{\delta}
    \inner{\abs{\mathcal L_1 u(x,y_0)}^2  +M_{1}(x
    ,y_0)^2\abs{u(x,y_0)}^2}
     d x.
  \end{split}
  \end{equation*}
  Observe the above inequality holds for all $y_0\in \omega_j \cap \mathcal N_1$ with $\mathcal N_1$ dense in $\omega_j$. Then after integration with respect to
  $y_0$ we obtain
  \begin{eqnarray*}
  \begin{aligned}
    &\lambda^{\frac{1}{k+1}}\norm{
    M_{1}  u}_{L^2\inner{]-\delta,\delta[\,\times \omega_j}}\\
   & \leq C
    \int_{\omega_j}M_{1}(r(y),y)\abs{u\inner{r(y),y}}^2  d y+C
    \norm{\mathcal L_1 u}_{L^2}^2+C\norm{
   M_{1}  u}_{L^2}^2\\
    &\leq C \sum_{1\leq j\leq 2}
    \norm{\mathcal L_j u}_{L^2}^2+C\norm{
   M_{1}  u}_{L^2}^2,
   \end{aligned}
  \end{eqnarray*}
  the last inequality following from \eqref{10101505} since $M_1\leq G$.  Observe $]-\delta,\delta[ \subset \bigcup_{1\leq j\leq N_1} \bar \omega_j$ with $\omega_j $ disjoint, and thus
  \begin{eqnarray*}
  \begin{aligned}
     \lambda^{\frac{1}{k+1}}\norm{
    M_{1}  u}_{L^2} \leq C \sum_{1\leq j\leq 2}
    \norm{\mathcal L_j u}_{L^2}^2+C\norm{
   M_{1}  u}_{L^2}^2.
   \end{aligned}
  \end{eqnarray*}
   Now we choose $\lambda_0$  such that $\lambda_0^{\frac{1}{k+1}}=4C$; this gives  
  \begin{eqnarray*}
 \forall\ u\in C_0^\infty(]-\delta,\delta[\,\times\,]-\delta,\delta[),\  \forall\ \lambda\geq \lambda_0, \quad \norm{
    M_{1}  u}_{L^2} \leq C \sum_{1\leq j\leq 2}
    \norm{\mathcal L_j u}_{L^2}^2.
  \end{eqnarray*}
 In view of \eqref{wfd} we have for any $u\in C_0^\infty(]-\delta,\delta[\,\times\,]-\delta,\delta[)$,
 \begin{eqnarray*}
 	\norm{\partial_xu}_{L^2}^2+\norm{\lambda(\partial_x\varphi)u}_{L^2}^2\leq  \norm{\mathcal L_1 u}_{L^2}^2+\norm{M_1 u}_{L^2}^2,
 \end{eqnarray*}
  and thus the desired estimate follows.  The proof of Theorem \ref{MainT2} is  completed.
\end{proof}

{\bf Acknowledgments} 
 The research of the first author was supported by NSFC (Nos. 11871054,11961160716, 11771342) and the Fundamental Research Funds for the Central Universities(No.2042020kf0210).

%\bibliographystyle{abbrv}
%\bibliography{Bibliography}

\begin{thebibliography}{10}

\bibitem{MR0407433}
M.~Derridj.
\newblock Sur une classe d'op\'{e}rateurs diff\'{e}rentiels hypoelliptiques \`a
  coefficients analytiques.
\newblock 1971.

\bibitem{MR2298786}
M.~Derridj.
\newblock Subelliptic estimates for some systems of complex vector fields.
\newblock In {\em Hyperbolic problems and regularity questions}, Trends Math.,
  pages 101--108. Birkh\"auser, Basel, 2007.

\bibitem{MR3603885}
M.~Derridj.
\newblock On some systems of real or complex vector fields and their related
  {L}aplacians.
\newblock In {\em Analysis and geometry in several complex variables}, volume
  681 of {\em Contemp. Math.}, pages 85--124. Amer. Math. Soc., Providence, RI,
  2017.

\bibitem{MR2471931}
M.~Derridj and B.~Helffer.
\newblock Subelliptic estimates for some systems of complex vector fields:
  quasihomogeneous case.
\newblock {\em Trans. Amer. Math. Soc.}, 361(5):2607--2630, 2009.

\bibitem{MR2885111}
M.~Derridj and B.~Helffer.
\newblock On the subellipticity of some hypoelliptic quasihomogeneous systems
  of complex vector fields.
\newblock In {\em Complex analysis}, Trends Math., pages 109--123.
  Birkh\"{a}user/Springer Basel AG, Basel, 2010.

\bibitem{MR2868553}
M.~Derridj and B.~Helffer.
\newblock Subellipticity and maximal hypoellipticity for two complex vector
  fields in {$(2+2)$}-variables.
\newblock In {\em Geometric analysis of several complex variables and related
  topics}, volume 550 of {\em Contemp. Math.}, pages 15--56. Amer. Math. Soc.,
  Providence, RI, 2011.

\bibitem{HelfferNier05}
B.~Helffer and F.~Nier.
\newblock {\em Hypoelliptic estimates and spectral theory for {F}okker-{P}lanck
  operators and {W}itten {L}aplacians}, volume 1862 of {\em Lecture Notes in
  Mathematics}.
\newblock Springer-Verlag, Berlin, 2005.

\bibitem{MR897103}
B.~Helffer and J.~Nourrigat.
\newblock {\em Hypoellipticit\'e maximale pour des op\'erateurs polyn\^omes de
  champs de vecteurs}, volume~58 of {\em Progress in Mathematics}.
\newblock Birkh\"auser Boston Inc., Boston, MA, 1985.

\bibitem{Hormander67}
L.~H\"{o}rmander.
\newblock Hypoelliptic second order differential equations.
\newblock {\em Acta Math.}, 119:147--171, 1967.

\bibitem{MR781536}
L.~H\"{o}rmander.
\newblock {\em The analysis of linear partial differential operators. {III}},
  volume 274 of {\em Grundlehren der Mathematischen Wissenschaften [Fundamental
  Principles of Mathematical Sciences]}.
\newblock Springer-Verlag, Berlin, 1985.
\newblock Pseudodifferential operators.

\bibitem{MR781537}
L.~H\"{o}rmander.
\newblock {\em The analysis of linear partial differential operators. {IV}},
  volume 275 of {\em Grundlehren der Mathematischen Wissenschaften [Fundamental
  Principles of Mathematical Sciences]}.
\newblock Springer-Verlag, Berlin, 1985.
\newblock Fourier integral operators.

\bibitem{MR2276079}
J.-L. Journ{\'e} and J.-M.Tr{\'e}preau.
\newblock Hypoellipticit\'e sans sous-ellipticit\'e: le cas des syst\`emes de
  {$n$} champs de vecteurs complexes en {$(n+1)$} variables.
\newblock In {\em Seminaire: {E}quations aux {D}\'eriv\'ees {P}artielles.
  2005--2006}, S\'emin. \'Equ. D\'eriv. Partielles, pages Exp. No. XIV, 19.
  \'Ecole Polytech., Palaiseau, 2006.

\bibitem{Kohn78}
J.~J. Kohn.
\newblock Lectures on degenerate elliptic problems.
\newblock In {\em Pseudodifferential operator with applications (Bressanone,
  1977)}, pages 89--151. Liguori, Naples, 1978.

\bibitem{MR2183286}
J.~J. Kohn.
\newblock Hypoellipticity and loss of derivatives.
\newblock {\em Ann. of Math. (2)}, 162(2):943--986, 2005.
\newblock With an appendix by Makhlouf Derridj and David S. Tartakoff.

\bibitem{MR2599384}
N.~Lerner.
\newblock {\em Metrics on the phase space and non-selfadjoint
  pseudo-differential operators}.
\newblock Birkh\"auser Verlag, Basel, 2010.

\bibitem{MR3775156}
W.-X. Li.
\newblock Compactness of the resolvent for the {W}itten {L}aplacian.
\newblock {\em Ann. Henri Poincar\'{e}}, 19(4):1259--1282, 2018.

\bibitem{MR567778}
H.-M. Maire.
\newblock Hypoelliptic overdetermined systems of partial differential
  equations.
\newblock {\em Comm. Partial Differential Equations}, 5(4):331--380, 1980.

\bibitem{MR2987644}
F.~Nier.
\newblock Hypoellipticity for {F}okker-{P}lanck operators and {W}itten
  {L}aplacians.
\newblock In {\em Lectures on the analysis of nonlinear partial differential
  equations. {P}art 1}, volume~1 of {\em Morningside Lect. Math.}, pages
  31--84. Int. Press, Somerville, MA, 2012.

\bibitem{MR1044428}
J.~Nourrigat.
\newblock Subelliptic systems.
\newblock {\em Comm. Partial Differential Equations}, 15(3):341--405, 1990.

\bibitem{MR1098615}
J.~Nourrigat.
\newblock Syst\`emes sous-elliptiques. {II}.
\newblock {\em Invent. Math.}, 104(2):377--400, 1991.

\bibitem{RothschildStein76}
L.~Rothschild and E.~M. Stein.
\newblock Hypoelliptic differential operators and nilpotent groups.
\newblock {\em Acta Math.}, 137(3-4):247--320, 1976.

\bibitem{MR0290201}
F.~Tr{\`e}ves.
\newblock A new method of proof of the subelliptic estimates.
\newblock {\em Comm. Pure Appl. Math.}, 24:71--115, 1971.

\bibitem{MR0426068}
F.~Treves.
\newblock Study of a model in the theory of complexes of pseudodifferential
  operators.
\newblock {\em Ann. of Math. (2)}, 104(2):269--324, 1976.

\end{thebibliography}

\end{document}